\newtheorem{remark}{Remark}[section]
\newtheorem{lemma}{Lemma}[section]
\newtheorem{theorem}{{\sc Theorem}}[section]
\newbox\TempBox \newbox\TempBoxA
\numberwithin{equation}{section}
\begin{document}
\begin{spacing}{1.5}
\title{\bf{Nonparametric estimation of the conditional density function with right-censored and dependent data\thanks{This work was supported by National Natural Science
Foundation of China (No.11301084), and Natural Science
Foundation of Fujian Province, China (No. 2014J01010).}}}
\author{{Xianzhu Xiong\thanks{Corresponding Author. Email: xiongxianzhu2001@sina.com }\,\,,Meijuan Ou}.\\
{College  of Mathematics and Computer Science,}\\
       {Fuzhou University, Fuzhou 350108, China}}
\date{}
\maketitle

\noindent
$\mathbf{Abstract}$\quad In this paper, we study the local constant and the local linear estimators of the conditional density function with right-censored data which exhibit some type of dependence. It is assumed that the observations form a stationary $\alpha-$mixing sequence. The asymptotic  normality of the two estimators is established, which combined with the condition that $\lim\limits_{n\rightarrow\infty}nh_nb_n=\infty$ implies the consistency of the two estimators and can be employed to construct confidence intervals for the conditional density function. The result on the local linear estimator of the conditional density function in Kim et al. (2010) is relaxed from the i.i.d. assumption to the $\alpha-$mixing setting, and the result on the local linear estimator of the conditional density function in Spierdijk (2008) is relaxed from the $\rho$-mixing assumption to the $\alpha-$mixing setting. Finite sample behavior of the estimators is investigated by simulations.
\\\
$\mathbf{Keywords}$\quad conditional density function; right-censored data; $\alpha-$mixing; local constant and local linear estimators; asymptotic normality
\\\
$\mathbf{Mathematics\,\,Subject\,\,Classification}$\quad 62G07; 62N02.

\section{Introduction}
\noindent
It is well-known that the conditional density function plays an important role in the nonparametric statistical inference. It is a good tool not only to uncover the complex relationships between a response variable and some potential covariates, but also to estimate some data characteristicses including the conditional mode and the conditional hazard rate. It has also some applications in financial econometrics (see A\"{\i}t-Sahalia (1999)). The estimation of the conditional density function has been widely studied in the case of multivariate data (see Hyndman et al. (1996), Fan and Yim (2004), Izbicki and Lee (2017)), and it has also received much attention in the case of functional (i.e., infinite dimensional) data (see Ferraty et al. (2005), Rachdi et al. (2014)). However, in these papers, it is assumed that the observations are complete.

In Survival Analysis or Reliability, right-censored data are often encountered. There is substantial literature on nonparametric modelling of right-censored data. For example, Guessoum and Ould Sa\"{\i}d (2008, 2010, 2012) and Liang and Iglesias-P$\acute{\rm e}$rez (2018) considered the estimation of the conditional mean function. This paper focuses on the estimation of the conditional density function, and there are some papers on this topic such as Spierdijk (2008), Kim et al. (2010), Liang and Peng (2010) and Khardani and Semmar (2014). Spierdijk (2008) studied one version of local linear estimators of the conditional density function with stationary $\rho-$mixing observations, while Kim et al. (2010) proposed another version of local linear estimators with different weights in the independent and identical distributed ({i.i.d.}) case. The weights are random quantities determined by the Kaplan-Meier estimation of the survival function of the censoring times, and they indeed include the equal weights used by Spierdijk (2008). Those weights have been also adopted by many papers including Guessoum and Ould Sa\"{\i}d (2008, 2010, 2012), Liang and Peng (2010) and Liang and de U$\rm{\tilde{n}}$a-$\rm{\acute{A}}$lvarez (2011). Especially, Liang and Peng (2010) derived the asymptotic normality and the Berry-Esseen type bound for the kernel estimator of the conditional density function with stationary $\alpha$-mixing observations.

It should be noted that the kernel estimator proposed by Liang and Peng (2010) is a single-smoothing estimator (smoothing the covariates only) of the conditional density function, while the local linear estimator proposed by Kim et al. (2010) is a double-smoothing estimator (smoothing both the response variable and the covariates). In fact, compared with the single-smoothing estimator, the double-smoothing estimator not only appears closer to a conditional density function, but also has more flexibility to reduce the mean-squared error when the optimal bandwidths are selected. However, the asymptotic distribution of the local linear estimator of the conditional density function in Kim et al. (2010) was investigated in the i.i.d. setting (although Kim et al. (2010) established the asymptotic distribution of the local linear estimator of the conditional hazard function, the asymptotic distribution of the local linear estimator of the conditional density function is easily obtained by the same method).

The dependent data scenario is an important one in many applications with survival data. For example, in the domain of clinical trials, it often happens that the patients from the same hospital have correlated survival times due to unmeasured variables like the quality of the hospital equipment, and an example of such data can be found in Lipsitz and Ibrahim (2000). Correlated and censored data frequently appear in the domain of spatial and environmental statistics, and an example of such data can be found in Eastoe et al. (2006). Then, it is significant to study the asymptotic properties of the local linear estimator of the conditional density function with right-censored and dependent data.

In this paper, we establish the asymptotic normality of the local constant and the local linear estimators of the conditional density function for a right-censored model in the $\alpha$-mixing setting. The property of asymptotic normality combined with the condition that $\lim\limits_{n\rightarrow\infty}nh_nb_n=\infty$ implies the consistency of the two estimators and can be employed to construct confidence intervals for the conditional density function. There are three minor contributions in this paper. Firstly, Theorem 3.2 establishes the asymptotic normality of the local linear estimator, which extends the asymptotic distribution of the local linear estimator of the conditional density function in Kim et al.(2010) from the i.i.d. assumption to the $\alpha$-mixing setting. Since that $\alpha$-mixing is weaker than $\rho-$mixing and the fact that the local linear estimator in Spierdijk (2008) is a special case of the local linear estimators in Kim et al. (2010), Theorem 3.2 also extends the corresponding result in Spierdijk (2008). Secondly, Theorem 3.1 establishes the asymptotic normality of the local constant estimator (i.e., the Nadaraya-Watson estimator) of the conditional density function. The local constant estimator is actually a smoothing estimator of the kernel estimator proposed by Liang and Peng (2010). It follows from Theorem 3.1 and Theorem 3.2 that the local liner estimator has better asymptotic bias than the local constant estimator, which is the same as that in the case of complete data. And the simulations also further confirm it. Thirdly, we adopt the bandwidth condition used by Liang and Peng (2010), which is weaker than that in Kim et al. (2010).

Liang and Baek (2016) established the asymptotic distribution of the local constant and local linear estimators of the conditional density function for a left-truncation model in the $\alpha$-mixing setting. As pointed out by Stute (1993), right-censored data is completely different from left-truncated data, then the results for right-censored data cannot be deduced from those obtained in the left-truncated case. The rest of this paper is organized as follows. Section 2 describes the mixing condition and the nonparametric estimators. Assumptions and the main results of the estimators are stated in Section 3. A simulation study is presented in Section 4. Section 5 gives the proofs of the main results, while the proofs of some lemmas are postponed in Section 6.

\section{Nonparametric estimator with dependent right-censored data}
\subsection{Dependence structure}
\noindent
A sequence $\{Z_i,i\geq1\}$ is called $\alpha-$mixing or strongly mixing, if the $\alpha-$mixing coefficient
\begin{displaymath}
\alpha(n)=:\sup_{k\geq 1}\sup_{A\in\mathcal{F}_{1}^{k},B\in\mathcal{F}_{k+n}^{+\infty}}|P(AB)-P(A)P(B)|
\end{displaymath}
converges to $0$ as $n\rightarrow\infty$, where $\mathcal{F}_{j}^{k}=\sigma(\{Z_i|j\leq i \leq k\})$ denotes the $\sigma-$algebra generated by $\{Z_i|j\leq i\leq k\}$. It is well-known that $\alpha-$mixing is the weakest among various mixing conditions available in the literature, and many processes do fulfill the $\alpha-$mixing property. The reader can see Doukhan (1994) for a more complete discussion of the $\alpha-$mixing property.

\subsection{Estimators}
Let $T$ be a right-censored observation such that $T=\min\{Y,C\}$, where $Y$ is the survival time with a continuous distribution function (d.f.) $F(\cdot)$ and $C$ is the censoring time with a continous d.f. $G(\cdot)$ and the corresponding survival function $H(\cdot)$. Then $T$ has the d.f. $Q(u)=1-(1-F(u))(1-G(u)),\,u\in\mathbb{R}$. Let $\delta={\mathbf{I}}_{\{Y \leq C\}}$, where ${\mathbf{I}}_A$ denotes the indicator function of the set $A$. It is supposed that the associated covariate $X$ is a real-valued covariate  with a density function $f_X(\cdot)$. Moreover, for simplicity, we assume that $(X,Y)$ and $C$ are independent. The assumption is also adopted by many papers such as Guessoum and Ould Sa\"{\i}d (2008, 2010, 2012), Liang and Peng (2010) and Liang and de U$\rm{\tilde{n}}$a-$\rm{\acute{A}}$lvarez (2011). As pointed out by Kim et al. (2010), this assumption is a reasonable assumption since censoring occurs due to termination of study in most applications. Let $f_{(X,Y)}(\cdot,\cdot)$ be the joint density function of  $(X,Y)$, then for $x\in supp(f_X)=\{u\in\mathbb{R}|f_X(u)>0\}$, the conditional density function of $Y$, given $X=x$, is
\begin{align*}
f(y|x)=\frac{f_{(X,Y)}(x,y)}{f_X(x)}, \quad y\in\mathbb{R}.
\end{align*}

It is supposed that $\{(X_i,Y_i,C_i)|i\geq 1\}$ is a stationary $\alpha-$mixing sequence with coefficient $\alpha_1(n)$ from $(X,Y,C)$. Then, it follows from Lemma 2 of Cai (2001) that $\{(X_i,T_i,\delta_i)| i\geq 1\}$ is a stationary $\alpha-$mixing sequence with coefficient $4\alpha_1(n)$. From now on, it is supposed that $\{(X_i,T_i,\delta_i)| i\geq 1\}$ is a stationary $\alpha-$mixing sequence with coefficient $\alpha(n)$.

Suppose that the second derivative of  $f(y|s)$ with respect to $s$ at the point $x$ exists. Recall that, using the observations $\{(X_i,T_i,\delta_i)|1\leq i\leq n\}$,  Kim et al. (2010) proposed the local linear estimator $(\hat{f}_{LL}(y|x),{\hat{f}}^{(1,0)}_{LL}(y|x))^{\tau}$  of $(f(y|x),{f}^{(1,0)}(y|x))^{\tau}$, which is defined as $(\hat{a},\hat{b})$, where
\begin{alignat}{1}
(\hat{a},\hat{b})=\arg\min\limits_{(a,b)\in\mathbb{R}^2}\sum_{i=1}^{n}\Big(\frac{\delta_i}{\hat{H}(T_i)}\Lambda_{b_n}(T_i-y)-a-b(X_i-x)\Big)^2K_{h_n}(X_i-x),
\end{alignat}
where $f^{(i,j)}(y|x)=\partial^{(i+j)}f(y|x)/\partial x^i\partial y^j$, $K(\cdot)$ and $\Lambda(\cdot)$ are both  kernel functions, $K_{h_n}(\cdot)=K(\cdot/h_n)/h_n$, $\Lambda_{b_n}(\cdot)=\Lambda(\cdot/b_n)/b_n,$  $0<h_n\rightarrow0 (as\, n\rightarrow\infty)$, $0<b_n\rightarrow0 ( as\, n\rightarrow\infty)$, and $\hat{H}(\cdot)$ is the Kaplan-Meier estimator of $H(\cdot)$, which is defined as
\begin{equation*}
\hat{H}(t)=
\begin{cases}
\prod\limits_{i:T_{(i)} \leq t}\big(\frac{n-i}{n-i+1}\big)^{1-\delta_{(i)}},& \text{if}\quad t< T_{(n)},\\
0,&\text{if}\quad t\geq T_{(n)},
\end{cases}
\end{equation*}
where $T_{(1)}\leq T_{(2)}\leq \cdots\leq T_{(n)}$ are the order statistics of $T_1, T_2, \cdots, T_n$ and $\delta_{(i)}$ is the concomitant of $T_{(i)}$. By simple algebra, the local linear estimator $(\hat{f}_{LL}(y|x),{\hat{f}}^{(1,0)}_{LL}(y|x))^{\tau}$ can be explicitly written as
\begin{align}(\hat{f}_{LL}(y|x),{\hat{f}}^{(1,0)}_{LL}(y|x))^{\tau}=(\mathbf{X}^{\tau}\mathbf{W}\mathbf{X})^{-1}\mathbf{X}^{\tau}\mathbf{W}\mathbf{T},\end{align} where
${\mathbf{X}=\left(\begin{array}{c c}
1& (X_1-x)\\
\vdots & \vdots \\
1& (X_n-x)
\end{array}\right)}$, ${\mathbf{T}=\left(\begin{array}{c }
\frac{\delta_1}{\widehat{H}(T_1)}\Lambda_{b_n}(T_1-y) \\ \vdots \\ \frac{\delta_n}{\widehat{H}(T_n)}\Lambda_{b_n}(T_n-y)
\end{array}\right)}$, $\mathbf{W}=\text{diag}(K_{h_n}(X_i-x))$.
Let $\mathbf{t}_n={\Bigg(\begin{array}{c}t_{n0}(x) \\ t_{n1}(x)\end{array}\Bigg)}$ and $\mathbf{S}_n={\Bigg(\begin{array}{cc}s_{n0}(x) & s_{n1}(x)\\ s_{n1}(x)& s_{n2}(x)\end{array}\Bigg)}$, where $s_{nj}(x)=\frac {1}{n}\sum\limits_{i=1}^{n}\Big(\frac{X_i-x}{h_n}\Big)^jK_{h_n}(X_i-x),\,j=0,1,2;$ and\\
$t_{nj}(x)=\frac
  {1}{n}\sum\limits_{i=1}^{n}\Big(\frac{X_i-x}{h_n}\Big)^jK_{h_n}(X_i-x)\frac{\delta_i}{\widehat{H}(T_i)}\Lambda_{b_n}(T_i-y),\,j=0,1.$
 Then, $(\hat{f}_{LL}(y|x),{\hat{f}}^{(1,0)}_{LL}(y|x))^{\tau}$ can also be written as
 \begin{align}(\hat{f}_{LL}(y|x), \hat{f}^{(1,0)}_{LL}(y|x))^{\tau}=\text{diag}(1,h^{-1}_n)\mathbf{S}^{-1}_n\mathbf{t}_n.\end{align}

Similarly, the local constant estimator $\hat{f}_{NW}(y|x)$  of $f(y|x)$  can be defined as $\hat{c}$, where
\begin{align*}
\hat{c}=\arg\min\limits_{c\in\mathbb{R}}\sum_{i=1}^{n}\Big(\frac{\delta_i}{\hat{H}(T_i)}\Lambda_{b_n}(T_i-y)-c\Big)^2K_{h_n}(X_i-x).
\end{align*}
By simple algebra, $\hat{f}_{NW}(y|x)$ can be explicitly written as
\begin{alignat}{1}
\hat {f}_{NW}(y|x)=\frac{\frac{1}{n}\sum\limits_{i=1}^{n}K_{h_n}(X_i-x)\frac{\delta_i}{\widehat{H}(T_i)}\Lambda_{b_n}(T_i-y)}
{\frac{1}{n}\sum\limits_{i=1}^{n}K_{h_n}(X_i-x)},
\end{alignat}
which is actually a smoothing estimator of the kernel estimator proposed by Liang and Peng (2010).

\section{Assumptions and the main result}
\noindent In what follows, let $N(x)$ denote a neighborhood of $x$, and let $Const$ denote some finite and positive constant whose value may change from place to place. Now give the following assumptions needed to obtain our results and they are gathered here for convenient reference. The notations used in this section are similar to those used by Liang and Baek (2016).\\
\noindent
(A1) Both $K(\cdot)$ and $\Lambda(\cdot)$  are symmetric probability density functions with compact support $[-1,1]$.\\
(A2) The second partial derivatives of $f(\cdot|\cdot)$ are continuous in $N(x)\times N(y)$, and $f(y|x)>0$.\\
(A3) $f_X({\cdot})$ has bounded second derivatives in $N(x)$ and $f_X(x)>0.$\\
(A4)(\textrm{i}) For every positive integer $k$, there is the joint density function $f_k(\cdot,\cdot)$ of $(X_1,X_{1+k})$ on $\mathbb{R}\times\mathbb{R}$,\\ \hspace*{1.2cm}which satisfies $f_k(u,v)\leq Const$  for $(u,v)\in N(x)\times N(x)$ .\\
\hspace*{0.8cm}(\textrm{ii}) For every positive integer $k$, there is the joint density function $f_k(\cdot,\cdot,\cdot)$ of $(X_1,X_{1+k},Y_1)$ on $\mathbb{R}\times\mathbb{R}\times\mathbb{R}$, \\
\hspace*{1.2cm}which satisfies $f_k(u,v,w)\leq  Const$ for $(u,v,w)\in N(x)\times N(x)\times N(y)$.\\
\hspace*{0.8cm}(\textrm{iii}) For every positive integer $k$, there is the joint density function $f_k(\cdot,\cdot,\cdot)$ of $(X_1,X_{1+k},Y_{1+k})$ on\\ \hspace*{1.2cm} $\mathbb{R}\times\mathbb{R}\times\mathbb{R}$,
which satisfies $f_k(u,v,w)\leq  Const$ for $(u,v,w)\in N(x)\times N(x)\times N(y)$.\\
\hspace*{0.8cm}(\textrm{iv}) For every positive integer $k$, there is the joint density function $f_k(\cdot,\cdot,\cdot,\cdot)$ of $(X_1,X_{1+k},Y_1,Y_{1+k})$ on \\ \hspace*{1.2cm} $\mathbb{R}\times\mathbb{R}\times\mathbb{R}\times\mathbb{R}$,
which satisfies  $f_k(u_1,u_2,v_1,v_2)\leq  Const$ for $(u_1,u_2,v_1,v_2)\in N(x)\times N(x)\times N(y)\times N(y)$.\\
(A5) Assume that $\lim\limits_{n\rightarrow\infty}nh_nb_n=\infty,$ and the sequence $\alpha(n)$ satisfies that there exist positive integers $q_n$  such that $q_n=o((nh_nb_n)^{1/2})$ and $\lim\limits_{n\rightarrow \infty}(n(h_nb_n)^{-1})^{1/2}\alpha(q_n)=0$.
\begin{remark}
Assumptions (A1)-(A3) are commonly used  in the literature including Kim et al. (2010). Assumption (A4) is a technical assumption, which is used to make the calculations of covariances simple in the proofs and is needless in the independent setting. Assumption (A5) is often used to prove asymptotic normality of an $\alpha-$mixing process by Doob's technique. In the i.i.d. setting, assumption (A5) is just the assumption that $\lim\limits_{n\rightarrow\infty}nh_nb_n=\infty,$ which is weaker than assumption (C5) in Kim et al. (2010). Furthermore, suppose that $\alpha(n)=O(n^{-\lambda})$ for some $\lambda>0$, then assumption (A5) will be satisfied, for instance, take $h_n=b_n=O(n^{-\eta})$ for some $0<\eta<1/2$, $q_n=(nh_n^2/\log n)^{1/2}$ and $\lambda>(1+2\eta)/(1-2\eta).$ In what follows, it is assumed that $\alpha(n)=O(n^{-\lambda})$ for some $\lambda>0$.
\end{remark}

 Put $\triangle_{ij}=\int_\mathbb{R} u^iK^j(u)du$,$\nabla_{ij}=\int_\mathbb{R} u^i\Lambda^j(u)du$, where $i,j=0,1,2,3$;
$\mathbf{S}={\Bigg( \begin{array}{cc}
1 &0\\
0&\triangle_{21}
\end{array}
\Bigg )}$, $\mathbf{V}={\Bigg( \begin{array}{cc}
\triangle_{02}&\triangle_{12}\\
\triangle_{12}&\triangle_{22}
\end{array}
\Bigg )}$, and $\mathbf{U}={\Bigg( \begin{array}{c}
\triangle_{21}\\
\triangle_{31}
\end{array}
\Bigg )}$. Let  $\tau_{_Q}=\sup\{y\,|\,Q(y)<1\}$.

\begin{theorem} Let $\alpha(n)=O(n^{-\lambda})$ for some $\lambda>3$. Suppose that assumptions (A1)-(A5) hold, and $y<\tau_{_Q}$. Then
\begin{eqnarray*}
\sqrt{nh_nb_n}\Big\{\hat{f}_{NW}(y|x)-f(y|x)-\frac{h_n^2}{2}\triangle_{21}\Big[f^{(2,0)}(y|x)+2\frac{f_X^{'}(x)}{f_X(x)}f^{(1,0)}(y|x)\Big]
\\-\frac{b_n^2}{2}\nabla_{21}f^{(0,2)}(y|x)+o_{p}(h_n^2+b_n^2)\Big\}\stackrel{D}{\longrightarrow}
N\big(0,\sigma^2(y|x)\triangle_{02}\big),
\end{eqnarray*}
where $\sigma^2(y|x)=\frac{f(y|x)}{H(y) f_X(x)}\nabla_{02}.$
\end{theorem}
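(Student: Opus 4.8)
The plan is to treat $\hat f_{NW}(y|x)=t_{n0}(x)/s_{n0}(x)$ as a ratio and reduce everything to one centred triangular array. The first move is to discard the estimated weight: writing $\delta_i/\hat H(T_i)-\delta_i/H(T_i)=\delta_i(H(T_i)-\hat H(T_i))/(\hat H(T_i)H(T_i))$ and using the uniform strong consistency of the Kaplan--Meier estimator on compacts strictly inside $[0,\tau_{_Q})$ (available at rate $O_p(\sqrt{\log n/n})$ in the $\alpha$-mixing setting), together with $y<\tau_{_Q}$ and the compact support of $\Lambda$ from (A1), the gap between $t_{n0}(x)$ and its infeasible version $\tilde t_{n0}(x)$ formed with the true $H$ is $o_p((nh_nb_n)^{-1/2})$ and drops out after scaling by $\sqrt{nh_nb_n}$. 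Since $s_{n0}(x)\stackrel{P}{\to}f_X(x)>0$ by (A3), I linearize
\[
\hat f_{NW}(y|x)-f(y|x)=\frac{\tilde t_{n0}(x)-f(y|x)s_{n0}(x)}{s_{n0}(x)}=\frac{1}{f_X(x)}\,\frac1n\sum_{i=1}^nW_{ni}+o_p((nh_nb_n)^{-1/2}),
\]
with $W_{ni}=K_{h_n}(X_i-x)\big(\delta_i H(T_i)^{-1}\Lambda_{b_n}(T_i-y)-f(y|x)\big)$. The engine of all the moment computations is the inverse-probability-of-censoring identity: conditioning on $(X,Y)$ and using independence of $C$ with $P(C\ge y)=H(y)$ gives $E[\delta H(T)^{-1}g(X,T)]=E[g(X,Y)]$ and $E[\delta H(T)^{-2}g(X,T)]=E[g(X,Y)/H(Y)]$.

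For the bias I apply the first identity with $g=K_{h_n}(\cdot-x)\Lambda_{b_n}(\cdot-y)$, so $E[\tilde t_{n0}(x)]=E[K_{h_n}(X-x)\Lambda_{b_n}(Y-y)]$; substituting $u=x+h_ns$, $v=y+b_nt$, expanding $f_{(X,Y)}$ to second order, and killing the odd moments by symmetry of $K,\Lambda$ yields $E[\tilde t_{n0}(x)]=f_{(X,Y)}(x,y)+\tfrac{h_n^2}{2}\triangle_{21}f^{(2,0)}_{(X,Y)}(x,y)+\tfrac{b_n^2}{2}\nabla_{21}f^{(0,2)}_{(X,Y)}(x,y)+o(h_n^2+b_n^2)$, while (A3) gives $E[s_{n0}(x)]=f_X(x)+\tfrac{h_n^2}{2}\triangle_{21}f_X''(x)+o(h_n^2)$. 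Forming $E[\tilde t_{n0}(x)]-f(y|x)E[s_{n0}(x)]$ the leading terms cancel because $f_{(X,Y)}(x,y)=f(y|x)f_X(x)$, and the product rules $f^{(2,0)}_{(X,Y)}=f^{(2,0)}(y|x)f_X+2f^{(1,0)}(y|x)f_X'+f(y|x)f_X''$ and $f^{(0,2)}_{(X,Y)}=f^{(0,2)}(y|x)f_X$ reproduce exactly the bracketed bias of the statement after division by $f_X(x)$.

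For the variance, $(E[W_{ni}])^2=O((h_n^2+b_n^2)^2)$ is negligible, so $E[W_{ni}^2]$ is governed by its leading piece $E[K_{h_n}^2(X-x)\delta H(T)^{-2}\Lambda_{b_n}^2(T-y)]$, which by the second identity equals $E[K_{h_n}^2(X-x)\Lambda_{b_n}^2(Y-y)/H(Y)]$; the same rescaling gives $(h_nb_n)^{-1}f_{(X,Y)}(x,y)H(y)^{-1}\triangle_{02}\nabla_{02}+o((h_nb_n)^{-1})$, the remaining two terms in $E[W_{ni}^2]$ being only $O(h_n^{-1})$. Hence $nh_nb_n\,\mathrm{Var}(n^{-1}\sum W_{ni})\to f_{(X,Y)}(x,y)H(y)^{-1}\triangle_{02}\nabla_{02}$, and dividing by $f_X(x)^2$ gives precisely $\sigma^2(y|x)\triangle_{02}$. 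The $\alpha$-mixing content is to verify that the lag covariances do not disturb this limit, i.e. $\sum_{k\ge1}|\mathrm{Cov}(W_{n1},W_{n,1+k})|=o((h_nb_n)^{-1})$. Crucially, the bounded joint densities in (A4) make each such covariance $O(1)$ --- smaller than the single-term variance $O((h_nb_n)^{-1})$ by a full factor $h_nb_n$ --- so splitting the lag sum at an intermediate sequence $m_n$, the small lags contribute $O(m_n)=o((h_nb_n)^{-1})$, while the large lags are handled by Davydov's inequality with $\|W_{ni}\|_\nu=O((h_nb_n)^{1/\nu-1})$ and $\alpha(k)=O(k^{-\lambda})$; the hypothesis $\lambda>3$ is exactly what lets the tail sum be forced below $(h_nb_n)^{-1}$ for a suitable choice of $m_n$ and $\nu>2$.

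The main obstacle is the central limit step for $\sqrt{h_nb_n/n}\sum_{i=1}^n(W_{ni}-E[W_{ni}])$, which I would carry out by Doob's big-block/small-block device. Partition $\{1,\dots,n\}$ into alternating big blocks of length $p_n$ and small blocks of length $q_n$ (with $q_n$ as supplied by (A5)), so that the number of blocks is of order $n/(p_n+q_n)$. Using the $\alpha$-mixing coefficient one replaces the big-block sums by independent copies with total error controlled by $(n(h_nb_n)^{-1})^{1/2}\alpha(q_n)\to0$ from (A5); the small-block and end sums are shown negligible in mean square by the variance estimate of the previous step; and the Lindeberg condition for the independent big blocks is checked via a Lyapunov bound, where the compact supports of $K,\Lambda$ and the positivity $H(y)>0$ (from $y<\tau_{_Q}$) make each summand $O((h_nb_n)^{-1})$ and hence the normalized array uniformly asymptotically negligible. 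Combining the five steps and applying Slutsky's theorem with $s_{n0}(x)\stackrel{P}{\to}f_X(x)$ delivers the asserted asymptotic normality with variance $\sigma^2(y|x)\triangle_{02}$.
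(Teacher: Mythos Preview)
Your proof is essentially correct and follows the paper's strategy: first replace $\hat H$ by $H$, then linearize the ratio $\tilde t_{n0}/s_{n0}$ and establish a CLT for the resulting triangular array by Doob's big-block/small-block scheme, with the lag covariances controlled by splitting at a cutoff and combining the $O(h_nb_n)$ bound from (A4) with a Davydov-type inequality. The one structural difference is in the centering: you center at the constant target $f(y|x)$, so your summands $W_{ni}$ carry a nonzero mean that you extract as the bias; the paper instead centers at the \emph{random} conditional mean $f_n(y\mid X_i)=E[\Lambda_{b_n}(Y-y)\mid X_i]$, which splits the numerator into an exactly mean-zero piece $A_n(x)$ (for the CLT) and a pure bias piece $B_n(x)$ whose variance is then shown to be $o((nh_n)^{-1})$ separately. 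Both routes give the same answer; the paper's buys exactly mean-zero summands in the CLT step, yours avoids the separate variance computation for $B_n$.

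One small gap to watch: the rate you invoke for the Kaplan--Meier remainder, $O_p(\sqrt{\log n/n})$, yields $\sqrt{nh_nb_n}\,|t_{n0}-\tilde t_{n0}|=O_p(\sqrt{h_nb_n\log n})$, and under (A5) alone one only has $h_nb_n\to0$, not $h_nb_n\log n\to0$. The paper sidesteps this by citing the sharper rate $\sup_{u\le\gamma}|\hat H(u)-H(u)|=O_p(n^{-1/2})$ (Lemma~5.1, from Liang and Peng (2010)), which gives $O_p(\sqrt{h_nb_n})=o_p(1)$ unconditionally; you should quote that rate instead.
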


\begin{theorem} Let $\alpha(n)=O(n^{-\lambda})$ for some $\lambda>3$. Suppose that assumptions (A1)-(A5) hold, and $y<\tau_{_Q}$. Then
\begin{eqnarray*}
\sqrt{nh_nb_n}\Bigg\{{\rm{diag}}(1,h_n){
\Bigg( \begin{array}{c}
\hat{f}_{LL}(y|x)-f(y|x) \\
\hat{f}_{LL}^{(1,0)}(y|x)-f^{(1,0)}(y|x)
\end{array}
\Bigg )}-\frac{h_n^2}{2}f^{(2,0)}(y|x)\mathbf{S}^{-1}\mathbf{U}
\\-\frac{b_n^2}{2}\nabla_{21}f^{(0,2)}(y|x){
\Bigg( \begin{array}{c}
1 \\
0
\end{array}
\Bigg )}+o_p(h_n^2+b_n^2)\Bigg\}\stackrel{D}{\longrightarrow}
N\big(0,\sigma^2(y|x)\mathbf{S}^{-1}\mathbf{V}\mathbf{S}^{-1}\big).
\end{eqnarray*}
\end{theorem}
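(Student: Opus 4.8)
The plan is to exploit the matrix representation $\mathrm{diag}(1,h_n)(\hat f_{LL}(y|x),\hat f^{(1,0)}_{LL}(y|x))^{\tau}=\mathbf S_n^{-1}\mathbf t_n$ and to center $\mathbf t_n$ against the local-linear target. First I would replace the Kaplan--Meier weights $\delta_i/\hat H(T_i)$ by the population weights $\delta_i/H(T_i)$, controlling the error through the uniform rate of $\sup_{t\le y}|\hat H(t)-H(t)|$ on $[0,\tau_{_Q})$ (provided by a lemma in Section 6); since $y<\tau_{_Q}$, this difference contributes $o_p((nh_nb_n)^{-1/2})$ and may be discarded. Writing $W_i=\delta_iH^{-1}(T_i)\Lambda_{b_n}(T_i-y)$ and the residual $R_i=W_i-f(y|x)-(X_i-x)f^{(1,0)}(y|x)$, I would form $\mathbf t_n^{*}=\mathbf t_n-\mathbf S_n(f(y|x),h_nf^{(1,0)}(y|x))^{\tau}$, whose $j$-th component equals $n^{-1}\sum_i((X_i-x)/h_n)^jK_{h_n}(X_i-x)R_i$, so that $\mathrm{diag}(1,h_n)(\hat f_{LL}-f,\hat f^{(1,0)}_{LL}-f^{(1,0)})^{\tau}=\mathbf S_n^{-1}\mathbf t_n^{*}$. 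Because $\mathbf S_n\stackrel{P}{\longrightarrow}f_X(x)\mathbf S$, the whole problem reduces to the asymptotic behaviour of $\sqrt{nh_nb_n}\,\mathbf t_n^{*}$, after which Slutsky's theorem reassembles the statement.

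For the bias, the key identity, which follows from $T=\min(Y,C)$, $\delta=\mathbf I_{\{Y\le C\}}$ and the independence of $(X,Y)$ and $C$, is $E[\delta H^{-1}(T)g(T)\mid X]=E[g(Y)\mid X]$ for measurable $g$; applied with $g=\Lambda_{b_n}(\cdot-y)$ it gives $E[W_i\mid X_i=s]=\int\Lambda(w)f(y+b_nw\mid s)\,dw=f(y|s)+\tfrac12 b_n^2\nabla_{21}f^{(0,2)}(y|s)+o(b_n^2)$. I would then substitute $X_i=x+uh_n$, Taylor-expand $f(\cdot|x+uh_n)$ and $f_X(x+uh_n)$ to second order, and use that the local-linear residual annihilates the zeroth- and first-order $h_n$ terms. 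This yields $E[\mathbf t_n^{*}]=f_X(x)\{\tfrac12 h_n^2f^{(2,0)}(y|x)(\triangle_{21},\triangle_{31})^{\tau}+\tfrac12 b_n^2\nabla_{21}f^{(0,2)}(y|x)(1,0)^{\tau}\}+o(h_n^2+b_n^2)$, and premultiplying by $\mathbf S_n^{-1}\to f_X(x)^{-1}\mathbf S^{-1}$, together with $(\triangle_{21},\triangle_{31})^{\tau}=\mathbf U$ and $\mathbf S^{-1}(1,0)^{\tau}=(1,0)^{\tau}$, reproduces exactly the two deterministic bias vectors in the statement. Note that, unlike the Nadaraya--Watson case of Theorem 3.1, no $f_X'/f_X$ term survives, which is the source of the improved bias.

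For the variance and asymptotic normality, by the Cram\'er--Wold device it suffices to prove a CLT for the scalar $\sqrt{nh_nb_n}\,\boldsymbol\lambda^{\tau}(\mathbf t_n^{*}-E\mathbf t_n^{*})=\sqrt{h_nb_n/n}\sum_i(\zeta_{ni}-E\zeta_{ni})$ with $\zeta_{ni}=(\lambda_0+\lambda_1(X_i-x)/h_n)K_{h_n}(X_i-x)R_i$, for every $\boldsymbol\lambda\in\mathbb R^2$. The variance of a single term is driven by the analogous second-moment identity $E[\delta H^{-2}(T)\Lambda_{b_n}^2(T-y)\mid X=s]=\int H^{-1}(v)\Lambda_{b_n}^2(v-y)f(v|s)\,dv\sim b_n^{-1}\nabla_{02}f(y|s)/H(y)$, which after the $u$-substitution gives $h_nb_n\,E\zeta_{n1}^2\to f_X(x)^2\sigma^2(y|x)\,\boldsymbol\lambda^{\tau}\mathbf V\boldsymbol\lambda$, matching the claimed covariance $f_X(x)^2\sigma^2(y|x)\mathbf V$ for $\sqrt{nh_nb_n}\,\mathbf t_n^{*}$ and hence $\sigma^2(y|x)\mathbf S^{-1}\mathbf V\mathbf S^{-1}$ after the $f_X(x)^{-1}\mathbf S^{-1}$ factor. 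The genuinely new work beyond the i.i.d. case of Kim et al. (2010), and the step I expect to be the main obstacle, is the control of the cross-covariances $\sum_{i\ne j}\mathrm{Cov}(\zeta_{ni},\zeta_{nj})$: I would split the index pairs into a near range $0<|i-j|\le c_n$, bounded using the uniformly bounded joint densities of Assumption (A4) (which replace the factorization available under independence), and a far range $|i-j|>c_n$, bounded by Davydov's covariance inequality together with the polynomial mixing rate $\alpha(n)=O(n^{-\lambda})$, $\lambda>3$; both must be shown negligible after multiplication by $h_nb_n/n$, i.e. $\frac{h_nb_n}{n}\sum_{i\ne j}\mathrm{Cov}(\zeta_{ni},\zeta_{nj})\to0$, so that only the diagonal survives. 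The CLT for the dependent triangular-array sum is then obtained by Doob's big-block/small-block decomposition, where Assumption (A5)---the existence of $q_n=o((nh_nb_n)^{1/2})$ with $(n(h_nb_n)^{-1})^{1/2}\alpha(q_n)\to0$---guarantees that the small blocks and the inter-block dependence are asymptotically negligible and that a Lindeberg-type condition holds for the big blocks. Assembling the bias, the variance limit, and Slutsky's theorem with $\mathbf S_n^{-1}\stackrel{P}{\longrightarrow}f_X(x)^{-1}\mathbf S^{-1}$ completes the proof.
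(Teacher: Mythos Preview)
Your proposal is correct and follows essentially the same route as the paper: the representation $\mathbf S_n^{-1}\mathbf t_n$, replacement of $\hat H$ by $H$ via the Kaplan--Meier rate (Lemma~5.1), Taylor-expansion bias, and a Cram\'er--Wold reduction followed by Doob's big-block/small-block argument with covariance control through Assumption~(A4) and a Davydov-type inequality (Lemmas~5.2--5.4 and~6.2). The only cosmetic difference is that the paper centers each summand at the exact conditional mean $f_n(y|X_i)=E[\Lambda_{b_n}(Y-y)\mid X_i]$, so its stochastic term $\mathbf t^*_{1n}$ has mean zero and the bias appears through the random $s_{nj}(x)$, whereas you center at the local-linear target and read the bias from $E[\mathbf t_n^*]$; the two centerings are equivalent.
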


\begin{remark}
Theorem 3.1 and Theorem 3.2 are similar to Theorem 3.1 and Theorem 3.2 in Liang and Baek (2016), respectively. But as mentioned before, Theorem 3.1 and Theorem 3.2 can not be deduced from Theorem 3.1 and Theorem 3.2 in Liang and Baek (2016), respectively.
\end{remark}
\begin{remark}It follows from Theorem 3.1 and Theorem 3.2 that, if $nh^{5}_nb_n\rightarrow Const$ and $nh_nb^{5}_n\rightarrow Const$ for some $Const\neq 0$,
\begin{align*}
&\sqrt{nh_nb_n}\Big\{\!\hat{f}_{NW}(y|x)\!-\!f(y|x)\!-\!\frac{h_n^2\triangle_{21}}{2}\Big[f^{(2,0)}(y|x)\!+\!\frac{2f_X^{'}(x)}{f_X(x)}f^{(1,0)}(y|x)\Big]
\!-\!\frac{b_n^2\nabla_{21}}{2}f^{(0,2)}(y|x)\!\Big\}\!\stackrel{D}{\longrightarrow}\!
N\big(0,\sigma^2(y|x)\triangle_{02}\big),\nonumber\\
&and\nonumber\\
&\sqrt{nh_nb_n}\Big\{\hat{f}_{LL}(y|x)-f(y|x)-\frac{h_n^2\triangle_{21}}{2}f^{(2,0)}(y|x)
-\frac{b_n^2\nabla_{21}}{2}f^{(0,2)}(y|x)\nonumber\Big\}\stackrel{D}{\longrightarrow}
N\big(0,\sigma^2(y|x)\triangle_{02}\big).
\end{align*}
It is obvious that $\hat{f}_{NW}(y|x)$ and $\hat{f}_{LL}(y|x)$ have the same asymptotic variance, while the asymptotic bias of $\hat{f}_{NW}(y|x)$ is larger than that of $\hat{f}_{LL}(y|x)$ when $|\frac{f_X^{'}(x)}{f_X(x)}|$ or $|f^{(1,0)}(y|x)|$ is large. In fact, the term $|\frac{f_X^{'}(x)}{f_X(x)}|$ becomes large in the case that the design density is highly clustered.
\end{remark}

\begin{remark}If $nh^{5}_nb_n\rightarrow 0$ and $nh_nb^{5}_n\rightarrow 0$, it follows from Theorem 3.1 and Theorem 3.2 that,
\begin{align*}
\sqrt{nh_nb_n}\Big\{\hat{f}_{NW}(y|x)\!-\!f(y|x)\Big\}\!\stackrel{D}{\longrightarrow}\!
N\big(0,\sigma^2(y|x)\triangle_{02}\big)\,\,and\,\,\sqrt{nh_nb_n}\Big\{\hat{f}_{LL}(y|x)\!-\!f(y|x)\Big\}\!\stackrel{D}{\longrightarrow}\!
N\big(0,\sigma^2(y|x)\triangle_{02}\big).
\end{align*}
Now we apply the normal-approximation-based method to construct confidence intervals for the conditional density $f(y|x)$. The unknown parameter $f_X(x)$ can be estimated by the usual kernel estimator given by $\hat{f}_X(x)=\frac{1}{nh_n}\sum\limits_{i=1}^{n}K\Big(\frac{X_i-x}{h_n}\Big)$. And the two parameters $H(y)$ and $f(y|x)$ can be estimated by $\hat{H}(y)$
and $\hat{f}_{NW}(y|x)$, respectively. Hence, we can get a plug-in estimator for the asymptotic variance $\sigma^2(y|x)\triangle_{02}$. Thus we can construct asymptotic confidence intervals of $f(y|x)$.
\end{remark}

\section{Simulation Study}
In this section, a simulation study is carried out to investigate the finite sample performance of the local linear estimator $\hat{f}_{LL}(y|x)$ of the conditional density function under right-censored and dependent data. To be specific, we compare the performance among the local linear estimator $\hat{f}_{LL}(y|x)$, the Nadaraya-Watson estimator $\hat{f}_{NW}(y|x)$ and the kernel estimator denoted by $\hat{f}_{K}(y|x)$ (proposed by Liang and Peng (2010)) by their global mean squared errors (GMSE). In order to get an $\alpha-$mixing observed sequence $\{(X_i,T_i,\delta_i)|\,1\leq i\leq n\}$ after censoring, we generate the observed data as follows.

First generate a sequence of covariate $\{X_i|1\leq i\leq n\}$ as follows: $X_1\sim N(1,0.5^2),\,X_i=\rho X_{i-1}+e_i, i=2,\cdots,n,$ where $0<\rho<1,\,\{e_i|2\leq i\leq n\}\stackrel{{\rm{i.i.d.}}}{\sim}N(1-\rho,(0.5\sqrt{1-\rho^2})^2)$ and it is independent of the sequence $\{X_i|1\leq i\leq n\}$. Generate a sequence of response $\{Y_i|1\leq i\leq n\}$ as follows: $\log Y_i=X_i+\epsilon_i, i=1,2,\cdots,n,$ where
$\{\epsilon_i|1\leq i\leq n\}\stackrel{{\rm{i.i.d.}}}{\sim}N(0,1)$ and it is independent of the sequence $\{X_i|1\leq i\leq n\}$. We also generate an i.i.d. sequence of censoring time $\{C_i|1\leq i\leq n\}$, which follows a lognormal distribution with parameters $\mu$ and $0.6^2$ and is independent of the sequences $\{X_i|1\leq i\leq n\}$ and $\{Y_i|1\leq i\leq n\}$. As in Kim et al. (2010) and Spierdijk (2008), $\mu$ is chosen in such a way that the expected censoring percentages (CP) are $10\%, 30\%$ and $50\%$.

Then the sequence $\{X_i|1\leq i\leq n\}$ is a stationary $\alpha-$mixing sequence.
It follows from the $\alpha-$mixing property and the mutual independence among the three sequences $\{X_i|1\leq i\leq n\}$, $\{\epsilon_i|1\leq i\leq n\}$ and $\{C_i|1\leq i\leq n\}$, that $\{(X_i,Y_i,C_i)|1\leq i\leq n\}$ is an $\alpha-$mixing sequence, and the corresponding observed sequence $\{(X_i,T_i,\delta_i)|1\leq i\leq n\}$  is an $\alpha-$mixing sequence, which is actually a dependent version of data generated in Kim et al. (2010). Hence the conditional density function $f(y|x)=\mathbf{I}_{\{y>0\}}\frac{1}{\sqrt{2\pi}y}\exp\big\{-\frac{(\log y-x)^2}{2}\big\}$.

Take $K(u)=\Lambda(u)=\frac{3}{4}(1-u^2)\mathbf{I}_{\{|u|\leq1\}}$ and $\rho=0.3$. For the sample sizes $n=100$, $200$ and $300$, we compute the GMSE for every estimator $\hat{f}(\cdot|x)$ of ${f}(\cdot|x)$ at the point $x=1$ along $M=100$ Monte Carlo trials and a grid of bandwidths $h:=h_n$ and $b:=b_n$. For every estimator $\hat{f}_{LL}(\cdot|x),$ $\hat{f}_{NW}(\cdot|x),$ and $\hat{f}_{K}(\cdot|x)$, the GMSE of  $\hat{f}(\cdot|x)$ is defined as
\begin{align*}GMSE(h,b)=\frac{1}{Mn}\sum\limits_{l=1}^M\sum\limits_{k=1}^n\Big[\hat{f}(T_k,l|x)-{f}(T_k,l|x)\Big]^2.\end{align*}

Table 1 reports the minimal values of $GMSE(h,b)$ along the grid. Table 1 shows that (i) The minimum GMSEs of three estimators decrease as the sample zize increases, and
the minimum GMSEs of three estimators decrease as the CP decreases. This was expected, since we are moving to situations with more sampling information. (ii)  In each case, $\hat{f}_{LL}(\cdot|x)$ has a smaller GMSE than $\hat{f}_{NW}(\cdot|x)$, and $\hat{f}_{NW}(\cdot|x)$ has a smaller GMSE than $\hat{f}_{K}(\cdot|x)$.
\begin{table}
\renewcommand{\arraystretch}{1.4}
\setlength{\abovecaptionskip}{18pt}
\setlength{\belowcaptionskip}{18pt}
\caption{Minimum GMSEs of $\hat{f}_{K}(\cdot|x),$ $\hat{f}_{NW}(\cdot|x),$ and $\hat{f}_{LL}(\cdot|x)$ at $x=1$ for several censoring percentages  }
\label{tab1}
\centering
\begin{tabular}{lccccccc}
\hline
$CP$ \,\,\,\, &\,\,\, $n$ \,\,\,\,\,\, & \,\,\,\,$\hat{f}_{K}(\cdot|x)$ \,\,\,\,\,\, &\,\,$\hat{f}_{NW}(\cdot|x)$\, \,\,\,\,\,\, &\,\,\, $\hat{f}_{LL}(\cdot|x)$  \\
\hline
10\%   &100  & 0.01379&0.01168& 0.01022  \\
      &150  & 0.01303&0.01085& 0.00979 \\
      &200  & 0.01272&0.00978& 0.00906 \\
\hline
30\%    &100  & 0.01583&0.01252& 0.01138 \\
      &150  & 0.01476&0.01155& 0.01053\\
      &200  & 0.01342&0.01070& 0.00955\\
\hline
50\%    &100  & 0.04189&0.02541& 0.02456 \\
      &150  & 0.04159&0.02433& 0.02367\\
      &200  & 0.03926&0.02310& 0.02287\\
\hline
\centering
\end{tabular}
\end{table}

\section{Proofs of the main results}
Let $f_n(y|x)=E[\Lambda_{b_n}(Y-y)|X=x]$, $t^{*}_{nj}(x)=\frac{1}{n}\sum\limits_{i=1}^{n}\Big(\frac{X_i-x}{h_n}\Big)^jK_{h_n}(X_i-x)\big\{\frac{\delta_i}{\hat{H}(T_i)}
\Lambda_{b_{n}}(T_i-y)-f_n(y|X_i)\big\},
$ \\$\mathbf{t}^{*}_{n}=\big(t^{*}_{n0}(x),t^{*}_{n1}(x)\big)^{\tau},$
$t^{*}_{1nj}(x)=\frac{1}{n}\sum\limits_{i=1}^{n}\Big(\frac{X_i-x}{h_n}\Big)^jK_{h_n}(X_i-x)\big\{\frac{\delta_i}{H(T_i)}
\Lambda_{b_{n}}(T_i-y)-f_n(y|X_i)\big\},$
$\mathbf{t}^{*}_{1n}=\big(t^{*}_{1n0}(x),t^{*}_{1n1}(x)\big)^{\tau}.$

\begin{lemma}
Suppose that $\alpha(n)=O(n^{-\lambda})$ for some $\lambda>3.$ Then, for any $\gamma\in(0,\tau_{_Q})$, we have  \begin{align*}\sup\limits_{u\in[0,\gamma]}|\hat{H}(u)-H(u)|= O_p(n^{-1/2}).\end{align*}
\end{lemma}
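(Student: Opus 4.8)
The plan is to pass from the uniform deviation of $\hat{H}$ to that of the associated cumulative-hazard estimator, then reduce the latter to the uniform deviations of two empirical (sub)distribution functions, and finally establish those deviations at rate $n^{-1/2}$ under $\alpha$-mixing. First I would recast $\hat{H}$ through the Nelson--Aalen hazard of the censoring variable. Writing $L_n(s)=\frac1n\sum_{i=1}^n\mathbf{I}_{\{T_i\le s\}}$ (which estimates $Q$) and $\tilde{H}_n(s)=\frac1n\sum_{i=1}^n\mathbf{I}_{\{T_i\le s,\,\delta_i=0\}}$ (which estimates $H_1(s)=\int_0^s(1-F)\,dG$), the estimator satisfies $\hat{H}(t)=\prod_{s\le t}\bigl(1-d\hat{\Lambda}(s)\bigr)$ with $\hat{\Lambda}(t)=\int_0^t\frac{d\tilde{H}_n(s)}{1-L_n(s^-)}$, while $H(t)=\exp(-\Lambda(t))$ with $\Lambda(t)=\int_0^t\frac{dH_1(s)}{1-Q(s^-)}=-\log H(t)$. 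On $[0,\gamma]$ with $\gamma<\tau_{_Q}$ we have $1-Q(\gamma)>0$, so $1-L_n(s^-)$ is bounded away from $0$ with probability tending to one and $\Lambda$ is finite and bounded. Each jump of $\hat{\Lambda}$ is $O_p(n^{-1})$, whence $\log\hat{H}(t)=-\hat{\Lambda}(t)+O_p(n^{-1})$ uniformly, and since $x\mapsto e^{-x}$ is Lipschitz on the relevant bounded range,
\[
\sup_{u\in[0,\gamma]}|\hat{H}(u)-H(u)|\le Const\cdot\sup_{u\in[0,\gamma]}|\hat{\Lambda}(u)-\Lambda(u)|+O_p(n^{-1}).
\]

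Next I would control $\sup_{u\in[0,\gamma]}|\hat{\Lambda}(u)-\Lambda(u)|$. Writing the difference of the two integrals as $\int_0^t\frac{d(\tilde{H}_n-H_1)}{1-L_n(\cdot^-)}+\int_0^t\bigl(\frac{1}{1-L_n(\cdot^-)}-\frac{1}{1-Q(\cdot^-)}\bigr)\,dH_1$, the second integrand equals $(L_n-Q)/\{(1-L_n)(1-Q)\}$, bounded on $[0,\gamma]$ by $Const\,|L_n-Q|$, and integrates against a measure of total mass at most one; the first term is handled by integration by parts, the integrating factor $(1-L_n(\cdot^-))^{-1}$ being monotone and bounded on $[0,\gamma]$ with probability tending to one, hence of bounded variation. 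This yields
\[
\sup_{u\in[0,\gamma]}|\hat{\Lambda}(u)-\Lambda(u)|\le Const\Bigl(\sup_{u\in[0,\gamma]}|\tilde{H}_n(u)-H_1(u)|+\sup_{u\in[0,\gamma]}|L_n(u)-Q(u)|\Bigr),
\]
so everything reduces to the uniform deviations of $L_n-Q$ and $\tilde{H}_n-H_1$.

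The main obstacle is establishing these uniform $n^{-1/2}$ rates under dependence, which is where stationarity, monotonicity, and the mixing rate $\lambda>3$ enter. For fixed $u$, $L_n(u)-Q(u)$ is a centred average of the bounded stationary $\alpha$-mixing variables $\mathbf{I}_{\{T_i\le u\}}$; since $\lambda>3$ gives $\sum_k\alpha(k)<\infty$, the variance is $O(n^{-1})$, and a Rosenthal-type moment inequality for $\alpha$-mixing partial sums furnishes a high-order moment bound of order $n^{-p/2}$. To upgrade this pointwise control to a uniform one I would take a grid $0=u_0<\cdots<u_{k_n}=\gamma$ with $Q(u_j)-Q(u_{j-1})\le Const/n$, so that $k_n=O(n)$; by monotonicity of $L_n$ and $Q$,
\[
\sup_{u\in[0,\gamma]}|L_n(u)-Q(u)|\le\max_{0\le j\le k_n}|L_n(u_j)-Q(u_j)|+Const/n,
\]
and the maximum over the $O(n)$ grid points is controlled by Markov's inequality together with a union bound, choosing the moment order $p$ large enough that $k_n\,n^{-p/2}\to0$. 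The identical argument applies to $\tilde{H}_n-H_1$, and combining the three displays gives the claimed rate. I expect the delicate point to be the bookkeeping in the moment inequality, namely selecting $p$ so that the union bound closes while the decay $\alpha(n)=O(n^{-\lambda})$ with $\lambda>3$ still supports the required Rosenthal-type estimate.
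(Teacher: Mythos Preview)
The paper does not actually prove this lemma: it simply records (in Remark~5.1) that the statement follows from Lemma~A.5(i) of Liang and Peng (2010). Your proposal, by contrast, attempts a self-contained argument. The reduction strategy you outline---pass to the Nelson--Aalen cumulative hazard for the censoring variable, decompose $\hat\Lambda-\Lambda$ by integration by parts, and thereby reduce everything to the uniform $n^{-1/2}$ rates for the empirical sub-distribution functions $L_n-Q$ and $\tilde H_n-H_1$---is standard and correct.

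The genuine gap is in your final step, where you claim that a Rosenthal-type moment bound together with a union bound over a grid yields $\sup_u|L_n(u)-Q(u)|=O_p(n^{-1/2})$. It does not. If $E\bigl|n^{1/2}(L_n(u_j)-Q(u_j))\bigr|^p\le C_p$ uniformly in $j$, then Markov's inequality gives $P\bigl(n^{1/2}|L_n(u_j)-Q(u_j)|>M\bigr)\le C_p M^{-p}$, and the union bound over $k_n$ grid points gives only $C_p k_n M^{-p}$: the factors of $n^{-p/2}$ cancel exactly. Since $M$ must be held fixed (that is what $O_p$ means) while $k_n\to\infty$ (any grid fine enough for the monotonicity interpolation to contribute $O(n^{-1/2})$ has $k_n\ge c\,n^{1/2}$), this bound diverges. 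The criterion you wrote, ``$k_n\,n^{-p/2}\to 0$'', is precisely the condition for $\max_j|L_n(u_j)-Q(u_j)|=o_p(1)$, i.e.\ uniform consistency, not for the rate $O_p(n^{-1/2})$. To obtain the sharp uniform rate under $\alpha$-mixing you need a genuine maximal or chaining argument, or---more directly---an invariance principle for the empirical process of a stationary $\alpha$-mixing sequence (as in Rio or Doukhan--Massart--Rio); with $\alpha(n)=O(n^{-\lambda})$ and $\lambda>3$ one has $\sum_k\alpha(k)^{1/2}<\infty$, which suffices for such a Donsker-type result, and tightness of $n^{1/2}(L_n-Q)$ in $D[0,\gamma]$ then gives the claimed bound immediately. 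The same remark applies verbatim to $\tilde H_n-H_1$.
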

\begin{remark}
Lemma 5.1 follows from Lemma A.5(i) in Liang and Peng (2010).
\end{remark}

\begin{lemma}
Let  $\alpha(n)\!=\!O(n^{-\lambda})$ for some $\lambda>3.$ Suppose that assumptions (A1), (A3), (A4)(i), and (A5) hold. If $nh_n\!\rightarrow\!\infty$, then $s_{nj}(x)\stackrel{P}{\longrightarrow}f_X(x)\triangle_{j1}$ for $0\leq j\leq3$; moreover, $\mathbf{S}_n\stackrel{P}{\longrightarrow}f_X(x)\mathbf{S}.$
\end{lemma}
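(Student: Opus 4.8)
The plan is to prove convergence in probability for each fixed $j$ by the standard mean-plus-variance (Chebyshev) route, and then read off the matrix statement. Write $\xi_i=\big(\frac{X_i-x}{h_n}\big)^jK_{h_n}(X_i-x)$, so that $s_{nj}(x)=\frac1n\sum_{i=1}^n\xi_i$. First I would compute the mean: by stationarity $E[s_{nj}(x)]=E[\xi_1]$, and the change of variables $u=x+h_nv$ gives
$$E[\xi_1]=\int_{\mathbb{R}}v^jK(v)f_X(x+h_nv)\,dv.$$
Since $K$ has compact support $[-1,1]$ (A1) and $f_X$ is continuous at $x$ with $f_X(x)>0$ (A3), dominated convergence (or a Taylor expansion using the bounded second derivative of $f_X$) yields $E[\xi_1]\to f_X(x)\int v^jK(v)\,dv=f_X(x)\triangle_{j1}$. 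It then remains to show $\mathrm{Var}(s_{nj}(x))\to0$.

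Next I would split the variance as
$$\mathrm{Var}(s_{nj}(x))=\frac1n\mathrm{Var}(\xi_1)+\frac1{n^2}\sum_{i\neq k}\mathrm{Cov}(\xi_i,\xi_k).$$
For the diagonal term, the same change of variables gives $E[\xi_1^2]=\frac1{h_n}\int v^{2j}K^2(v)f_X(x+h_nv)\,dv=O(h_n^{-1})$, so $\frac1n\mathrm{Var}(\xi_1)=O((nh_n)^{-1})\to0$ because $nh_n\to\infty$.

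The main obstacle is the off-diagonal sum, where the $\alpha$-mixing dependence must be controlled. By stationarity it is bounded by $\frac2n\sum_{k=1}^{n-1}|\mathrm{Cov}(\xi_1,\xi_{1+k})|$, and I would estimate the covariances with two complementary bounds. For small lags, assumption (A4)(i) gives, after the change of variables, $|E[\xi_1\xi_{1+k}]|\le Const\cdot\big(\int|v|^jK(v)\,dv\big)^2=O(1)$, and since $E[\xi_1]=O(1)$ as well, $|\mathrm{Cov}(\xi_1,\xi_{1+k})|\le Const$. For large lags, the compact support of $K$ makes $v\mapsto v^jK(v)$ bounded, so $|\xi_i|\le Const\cdot h_n^{-1}$, and Davydov's covariance inequality for bounded $\alpha$-mixing variables gives $|\mathrm{Cov}(\xi_1,\xi_{1+k})|\le Const\cdot h_n^{-2}\alpha(k)$. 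Splitting the sum at a cutoff $d_n$ and using $\alpha(k)=O(k^{-\lambda})$ with $\lambda>3$, the off-diagonal contribution is at most $\frac{Const}{n}\big(d_n+h_n^{-2}d_n^{1-\lambda}\big)$; choosing $d_n$ to grow slowly (e.g. $d_n=n^\beta$ with $\beta\in(0,1)$ small enough) makes both terms vanish given $nh_n\to\infty$ and the polynomial mixing rate. Hence $\mathrm{Var}(s_{nj}(x))\to0$.

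Combining the mean and variance estimates, Chebyshev's inequality yields $s_{nj}(x)\stackrel{P}{\longrightarrow}f_X(x)\triangle_{j1}$ for $0\le j\le3$. Finally, the matrix claim follows by reading the entries of $\mathbf{S}_n$, which are $s_{n0}(x)$, $s_{n1}(x)$, $s_{n2}(x)$: since $\triangle_{01}=\int K=1$, $\triangle_{11}=\int vK(v)\,dv=0$ by the symmetry of $K$, and $\triangle_{21}=\int v^2K(v)\,dv$, the entrywise limits assemble into $f_X(x)\mathbf{S}$.
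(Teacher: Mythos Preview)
Your argument is correct in substance, and in fact more detailed than the paper's own treatment: the paper does not prove Lemma~5.2 directly but simply records (Remark~5.2) that it is a corollary of Step~5 in Liang and Baek (2016). The mean--variance decomposition you use, with the covariance sum split at a cutoff into a small-lag part (direct bound via (A4)(i)) and a large-lag part (mixing covariance inequality), is exactly the device the paper itself deploys in the proof of Lemma~5.3 for the related term $J_{2n}(x)$ in (6.4)--(6.6); there the large-lag bound comes from the moment inequality of Lemma~6.2 rather than the sup-norm Davydov bound you invoke, but either works here since $v\mapsto v^jK(v)$ is bounded with compact support.

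One small slip: ``$d_n=n^\beta$ with $\beta$ small enough'' points the wrong way for the second piece. Making $\beta$ small helps $d_n/n$ but hurts $h_n^{-2}d_n^{1-\lambda}/n$; you actually need $n^{1+\beta(\lambda-1)}h_n^2\to\infty$, which under the bare hypothesis $nh_n\to\infty$ is guaranteed once $\beta(\lambda-1)>1$ (then $n^{1+\beta(\lambda-1)}h_n^2=(nh_n)^2\,n^{\beta(\lambda-1)-1}\to\infty$), so any $\beta\in\big(1/(\lambda-1),\,1\big)$ works. A cleaner, bandwidth-adapted choice is $d_n=[h_n^{-2/\lambda}]$, which balances the two pieces and gives
\[
\frac{d_n}{n}+\frac{h_n^{-2}d_n^{1-\lambda}}{n}=O\!\big((nh_n^{2/\lambda})^{-1}\big)\longrightarrow 0,
\]
since $nh_n^{2/\lambda}=(nh_n)^{2/\lambda}\,n^{1-2/\lambda}\to\infty$ from $nh_n\to\infty$ and $2/\lambda<1$. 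With this adjustment your proof is complete.
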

\begin{remark}
Lemma 5.2 is a direct corollary of Step 5 in Liang and Baek (2016) in the case that $G(y)\equiv1$ and $\theta=1$.
\end{remark}

\begin{lemma}
 Let  $\alpha(n)=O(n^{-\lambda})$ for some $\lambda>3.$ Suppose that assumptions (A1)-(A5) are satisfied. Then for $y<\tau_{_Q}$,
\begin{eqnarray*}
\sqrt{nh_nb_n}\Big\{\tilde{f}_{NW}(y|x)-f(y|x)-\frac{h_n^2}{2}\triangle_{21}\Big[f^{(2,0)}(y|x)+2\frac{f_X^{'}(x)}{f_X(x)}f^{(1,0)}(y|x)\Big]
\\-\frac{b_n^2}{2}\nabla_{21}f^{(0,2)}(y|x)+o_p(h_n^2+b_n^2)\Big\}\stackrel{D}{\longrightarrow}
N\big(0,\sigma^2(y|x)\triangle_{02}\big),
\end{eqnarray*}
where $\tilde {f}_{NW}(y|x)=\frac{\frac{1}{n}\sum\limits_{i=1}^{n}K_{h_n}(X_i-x)\frac{\delta_i}{{H}(T_i)}\Lambda_{b_n}(T_i-y)}
{\frac{1}{n}\sum\limits_{i=1}^{n}K_{h_n}(X_i-x)}.$

\end{lemma}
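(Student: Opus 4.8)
The plan is to separate a centered stochastic term from the bias and then reduce the whole statement to a central limit theorem for a stationary mean-zero array. Writing $t_{1n0}(x)=\frac1n\sum_{i=1}^n K_{h_n}(X_i-x)\frac{\delta_i}{H(T_i)}\Lambda_{b_n}(T_i-y)$ for the numerator of $\tilde f_{NW}$ and recalling the definition of $t^{*}_{1n0}(x)$, I decompose
\begin{align*}
\tilde f_{NW}(y|x)-f(y|x)&=\frac{t^{*}_{1n0}(x)}{s_{n0}(x)}+\frac{\frac1n\sum_{i=1}^n K_{h_n}(X_i-x)\bigl(f_n(y|X_i)-f(y|x)\bigr)}{s_{n0}(x)}\\
&=:\frac{t^{*}_{1n0}(x)}{s_{n0}(x)}+B_n.
\end{align*}
Since Lemma 5.2 gives $s_{n0}(x)\stackrel{P}{\longrightarrow}f_X(x)$, Slutsky's theorem reduces everything to (i) identifying $B_n$ with the displayed bias up to an $o_p(h_n^2+b_n^2)$ remainder (which becomes the $o_p$-term inside the braces), and (ii) proving a central limit theorem for $\sqrt{nh_nb_n}\,t^{*}_{1n0}(x)$.

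All computations hinge on the censoring identity $E\bigl[\frac{\delta}{H(T)}g(T)\mid X=x'\bigr]=E[g(Y)\mid X=x']$, valid for measurable $g$ because $C$ is independent of $(X,Y)$ and $H(u)=P(C\ge u)$; with $g=\Lambda_{b_n}(\cdot-y)$ it gives $E[\frac{\delta}{H(T)}\Lambda_{b_n}(T-y)\mid X=x']=f_n(y|x')$, which is exactly the centering in $t^{*}_{1n0}$ and makes each summand mean-zero. For $B_n$ I would compute $E[K_{h_n}(X-x)(f_n(y|X)-f(y|x))]$ by a double Taylor expansion: first $f_n(y|x')=f(y|x')+\frac{b_n^2}{2}\nabla_{21}f^{(0,2)}(y|x')+o(b_n^2)$ using the symmetry $\nabla_{11}=0$, then expanding $f(y|\cdot)f_X(\cdot)$ about $x$ to second order using $\triangle_{11}=0$. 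After dividing by $f_X(x)$ this reproduces the two bias terms $\frac{h_n^2}{2}\triangle_{21}[f^{(2,0)}+2(f_X'/f_X)f^{(1,0)}]$ and $\frac{b_n^2}{2}\nabla_{21}f^{(0,2)}$, while a direct second-moment bound on the numerator of $B_n$ shows its fluctuation about this mean is $o_p(h_n^2+b_n^2)$.

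For the stochastic term set $\xi_{ni}=K_{h_n}(X_i-x)\bigl(\frac{\delta_i}{H(T_i)}\Lambda_{b_n}(T_i-y)-f_n(y|X_i)\bigr)$, so that $\sqrt{nh_nb_n}\,t^{*}_{1n0}(x)=\sqrt{h_nb_n/n}\sum_{i=1}^n\xi_{ni}$ with $E\xi_{ni}=0$. Its variance splits as $h_nb_n\,\mathrm{Var}(\xi_{n1})+\frac{h_nb_n}{n}\sum_{i\ne j}\mathrm{Cov}(\xi_{ni},\xi_{nj})$. For the diagonal term the companion identity $E[\frac{\delta}{H^2(T)}g(T)\mid X=x']=E[g(Y)/H(Y)\mid X=x']$, the substitutions $u=(x'-x)/h_n$, $w=(v-y)/b_n$, and the fact that $1/H$ stays bounded near $y$ because $y<\tau_{_Q}$, give $h_nb_n\,\mathrm{Var}(\xi_{n1})\to \frac{f(y|x)f_X(x)}{H(y)}\nabla_{02}\triangle_{02}$, which after dividing by $f_X(x)^2$ is precisely $\sigma^2(y|x)\triangle_{02}$. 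The off-diagonal sum is shown negligible by the usual split into near-diagonal pairs $|i-j|\le c_n$, each bounded by a constant via the bounded joint densities of Assumption (A4), and far pairs controlled by a Davydov-type covariance inequality under $\alpha(n)=O(n^{-\lambda})$, $\lambda>3$; a suitable choice of $c_n$ kills both parts.

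The main obstacle is the central limit theorem for $\sum_{i=1}^n\xi_{ni}$ under $\alpha$-mixing, which I would obtain by Doob's big-block/small-block method. Partition $\{1,\dots,n\}$ into alternating big blocks of length $p_n$ and small blocks of length $q_n$ from Assumption (A5), with $q_n=o((nh_nb_n)^{1/2})$ and $p_n$ chosen so that $q_n/p_n\to0$, $p_n=o((nh_nb_n)^{1/2})$, and $p_n\sim(nh_nb_n)^{1/2}$ up to the admissible slack. One then shows the scaled small-block sum has variance tending to $0$; that the big blocks are asymptotically independent, the error in factorizing their joint characteristic function being of order $(n/p_n)\alpha(q_n)$, which vanishes by the condition $(n(h_nb_n)^{-1})^{1/2}\alpha(q_n)\to0$ in (A5); and that the independent-block approximation satisfies the Lindeberg condition, which uses the compact support of $K$ and $\Lambda$ (so $|\xi_{ni}|$ is bounded by a multiple of $(h_nb_n)^{-1}$) together with $q_n=o((nh_nb_n)^{1/2})$. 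Combining the diagonal variance, the block CLT, and Slutsky's theorem with Lemma 5.2 delivers the limit $N(0,\sigma^2(y|x)\triangle_{02})$. The delicate simultaneous balancing of $p_n$ and $q_n$ so that small blocks are negligible while big blocks decouple, and the Lindeberg verification for sums of up to $p_n$ dependent summands of size growing in $n$, is the technically hardest step; the extra weight $\delta/H(T)$ (replacing the truncation weight of Liang and Baek (2016)) is absorbed cleanly by the two moment identities above, which is why the result does not follow from the left-truncation case.
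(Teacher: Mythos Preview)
Your proposal is correct and follows essentially the same route as the paper: the decomposition $\tilde f_{NW}-f=s_{n0}^{-1}(A_n+B_n)$ into a centered stochastic part and a bias part, the Taylor-expansion identification of $E[B_n]$ together with a variance bound on $B_n$ (the paper shows $nh_n\mathrm{Var}[B_n]=o(1)$, which gives a fluctuation of order $o_p((nh_n)^{-1/2})$ rather than the $o_p(h_n^2+b_n^2)$ you state, but either suffices after multiplication by $\sqrt{nh_nb_n}$), and Doob's big-block/small-block CLT for $A_n$ using the Volkonskii factorization bound, the Davydov covariance split at a threshold $c_n$, and the Lindeberg check via the uniform bound $|A_{ni}|=O((h_nb_n)^{-1/2})$ from the compact supports. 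The censoring identities you single out are exactly the ones the paper uses in computing $E[A_{ni}]=0$ and $\mathrm{Var}(A_{ni})$.
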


\begin{lemma}
Let $\alpha(n)=O(n^{-\lambda})$ for some $\lambda>3$. Suppose that assumptions (A1)-(A5) hold, and $y<\tau_{_Q}$, then
\begin{eqnarray*}
\sqrt{nh_nb_n}{\mathbf{t}}^{*}_{1n}\stackrel{D}{\longrightarrow}
N\big(0,\sigma^2(y|x)f^2_X(x)\mathbf{V}\big).
\end{eqnarray*}
\end{lemma}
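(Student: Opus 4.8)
The plan is to prove a central limit theorem for the bivariate triangular array $\sqrt{nh_nb_n}\,\mathbf{t}^{*}_{1n}$ via the Cram\'er--Wold device combined with Doob's big-block/small-block technique. First I would fix $(c_0,c_1)\in\mathbb{R}^2$ and reduce the bivariate statement to the scalar CLT for $S_n:=\sqrt{nh_nb_n}\,\big(c_0 t^{*}_{1n0}(x)+c_1 t^{*}_{1n1}(x)\big)=\sum_{i=1}^n Z_{ni}$, where $Z_{ni}=\sqrt{h_nb_n/n}\,\big(c_0+c_1\frac{X_i-x}{h_n}\big)K_{h_n}(X_i-x)\,\eta_i$ and $\eta_i=\frac{\delta_i}{H(T_i)}\Lambda_{b_n}(T_i-y)-f_n(y|X_i)$. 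The crucial preliminary observation is the censoring identity: since $(X,Y)$ is independent of $C$ and $\delta=\mathbf{I}_{\{Y\le C\}}$, conditioning on $(X,Y)$ and using the continuity of $G$ gives $E\big[\frac{\delta}{H(T)}\Lambda_{b_n}(T-y)\mid X\big]=E\big[\Lambda_{b_n}(Y-y)\mid X\big]=f_n(y|X)$, so that $E[\eta_i\mid X_i]=0$ and each $Z_{ni}$ is centered. The same conditioning applied to the square yields $E\big[\frac{\delta}{H^2(T)}\Lambda_{b_n}^2(T-y)\mid X\big]=E\big[\frac{\Lambda_{b_n}^2(Y-y)}{H(Y)}\mid X\big]$, which I will use for the variance.

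Second, I would compute $\mathrm{Var}(S_n)$ and show it converges to $\sigma^2:=\sigma^2(y|x)f_X^2(x)\,(c_0,c_1)\mathbf{V}(c_0,c_1)^{\tau}$. By stationarity $\sum_i\mathrm{Var}(Z_{ni})=nE[Z_{n1}^2]$; performing the change of variables $s=x+h_nw$ in the $X$-integral and $v=y+b_nu$ in the $T$-integral, and using (A2), (A3) and the second-moment identity above, gives $nE[Z_{n1}^2]\to \frac{\nabla_{02}}{H(y)}f(y|x)f_X(x)\int(c_0+c_1w)^2K^2(w)\,dw=\sigma^2$, since $\frac{\nabla_{02}}{H(y)}f(y|x)f_X(x)=\sigma^2(y|x)f_X^2(x)$ and the quadratic form reproduces exactly the entries $\triangle_{02},\triangle_{12},\triangle_{22}$ of $\mathbf{V}$. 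The off-diagonal part $\sum_{i\ne j}\mathrm{Cov}(Z_{ni},Z_{nj})$ must then be shown to vanish: I would split according to $|i-j|$, bounding the near-diagonal covariances by the joint-density bounds of (A4)(i)--(iv) (which, after the kernel substitutions, keep each two-point contribution of a controlled order rather than blowing up like $1/(h_nb_n)$), and bounding the distant covariances by Davydov's inequality together with $\alpha(n)=O(n^{-\lambda})$, $\lambda>3$.

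Third, with the variance pinned down, I would carry out the big-block/small-block (Bernstein) decomposition. Partition $\{1,\dots,n\}$ into alternating big blocks of length $p_n$ and small blocks of length $q_n$ (the $q_n$ of (A5)), choosing $p_n$ so that $q_n=o(p_n)$, $p_n=o((nh_nb_n)^{1/2})$, and $(n/p_n)\alpha(q_n)\to0$; such a $p_n$ exists by (A5). Writing $S_n$ as the sum of big-block sums $\zeta_m$, small-block sums, and a remainder, I would first show that the small blocks and the remainder are $o_p(1)$ in $L^2$ (so their total variance is negligible), then apply the Volkonskii--Rozanov lemma to replace the $\zeta_m$ by independent copies at a cost of $16k_n\alpha(q_n)\to0$ in characteristic functions, and finally verify Lindeberg's condition for the independent blocks using the uniform bound $|Z_{ni}|=O((nh_nb_n)^{-1/2})$ (valid because $K,\Lambda$ have compact support and $H(y)>0$ for $y<\tau_{_Q}$, which forces $T_i$ to lie near $y$ on the support of $\Lambda_{b_n}$) together with $p_n=o((nh_nb_n)^{1/2})$, so that $|\zeta_m|=O(p_n/(nh_nb_n)^{1/2})\to0$ uniformly. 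Collecting these facts yields $S_n\stackrel{D}{\longrightarrow}N(0,\sigma^2)$, and the Cram\'er--Wold device completes the proof.

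The main obstacle will be the covariance control in the second step: establishing that $\sum_{i\ne j}\mathrm{Cov}(Z_{ni},Z_{nj})\to0$ and, equivalently, that the small-block contribution is asymptotically negligible. This is precisely where the technical assumption (A4) and the mixing rate $\lambda>3$ are genuinely needed---the near-diagonal terms require the uniform bounds on the joint densities $f_k$ to prevent a blow-up of order $1/(h_nb_n)$, while the balance between the blocking rates $p_n,q_n$ and $\alpha(q_n)$ furnished by (A5) is what makes the distant covariance terms and the asymptotic-independence error vanish simultaneously.
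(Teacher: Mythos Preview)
Your proposal is correct and follows essentially the same route as the paper: Cram\'er--Wold reduction to a scalar CLT for $n^{-1/2}\sum_i R_{ni}$ with the modified kernel $\tilde K(u)=K(u)(c_0+c_1u)$, then Doob's big-block/small-block scheme with Volkonskii--Rozanov for asymptotic independence, the Davydov-type covariance inequality (the paper's Lemma~6.2) plus the joint-density bounds (A4) for covariance control, and the uniform bound on $R_{ni}$ for Lindeberg. The paper compresses all of this by noting that the argument is identical to its proof of (6.7) in Lemma~5.3 with $K$ replaced by $\tilde K$, but the substance is the same as what you have outlined.
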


The proofs of Lemmas 5.3 and 5.4 are relegated to Section 6.
\begin{proof}[Proof of Theorem 3.1]
Note that
\begin{align*}
\hat{f}_{NW}(y|x)-f(y|x)=\hat{f}_{NW}(y|x)-\tilde{f}_{NW}(y|x)+\tilde{f}_{NW}(y|x)-f(y|x),
\end{align*}
then Theorem 3.1 will follow from Lemma 5.3 if it can be proved that
\begin{align}
\sqrt{nh_nb_n}\Big|\hat{f}_{NW}(y|x)-\tilde{f}_{NW}(y|x)\Big|=o_p(1).
\end{align}

It follows from the fact that $y<\tau_{_Q}$ and $b_n\rightarrow0$ that there is a $\gamma<\tau_{_Q}$ such that $y+b_n\leq\gamma$ for a large $n$. Then, by assumption (A1), Lemma 5.1 and Lemma 5.2,
\begin{align}
\sqrt{nh_nb_n}\Big|\hat{f}_{NW}(y|x)-\tilde{f}_{NW}(y|x)\Big|=&\sqrt{nh_nb_n}\frac{\frac{1}{n}\left|\sum\limits_{i=1}^{n}K_{h_n}(X_i-x)\frac{\delta_i}{H(T_i)}\Lambda_{b_n}(T_i-y)
\frac{[\hat{H}(T_i)-H(T_i)]}{\hat{H}(T_i)}\right|}
{\frac{1}{n}\sum\limits_{i=1}^{n}K_{h_n}(X_i-x)}\nonumber\\
\leq&\frac{\sqrt{nh_nb_n}\sup\limits_{u\in[0,\gamma]} |\hat{H}(u)-H(u)|}{H(\gamma)-\sup\limits_{u\in[0,\gamma]}|\hat{H}(u)-H(u)|}
\times\frac{\frac{1}{n}\sum\limits_{i=1}^{n}K_{h_n}(X_i-x)\frac{\delta_i}{H(T_i)}\Lambda_{b_n}(T_i-y)}{s_{n0}(x)}\nonumber\\
=& \sqrt{nh_nb_n}O_p(n^{-\frac{1}{2}}) O_p(1)\times\frac{1}{n}\sum\limits_{i=1}^{n}K_{h_n}(X_i-x)\frac{\delta_i}{H(T_i)}\Lambda_{b_n}(T_i-y)\nonumber\\
=&O_p((h_nb_n)^\frac{1}{2})\times\frac{1}{n}\sum\limits_{i=1}^{n}K_{h_n}(X_i-x)\frac{\delta_i}{H(T_i)}\Lambda_{b_n}(T_i-y).
\end{align}

 It follows from the independence of $(X,Y)$ and $C$ that
\begin{align*}E\left[\frac{\delta}{H(T)}\Lambda_{b_n}(T-y)\Big|X=x\right]&=E\left[\frac{{\mathbf{I}}_{\{Y \leq C\}}}{H(Y)}\Lambda_{b_n}(Y-y)\Big|X=x\right]\nonumber\\
&=E\left[\frac{\Lambda_{b_n}(Y-y)}{H(Y)}E[{\mathbf{I}}_{\{Y \leq C\}}|Y]\Big|X=x\right]\nonumber\\
&=E[\Lambda_{b_n}(Y-y)|X=x].\end{align*}
Then \begin{align}f_n(y|x)=E[\Lambda_{b_n}(Y-y)|X=x]=E\left[\frac{\delta}{H(T)}\Lambda_{b_n}(T-y)\Big|X=x\right].\end{align}
According to assumptions (A1) and (A2), and by using the second Taylor expansion at the point $(x,y)$ of the function $f(\cdot|\cdot)$ in the computation of $f_n(y|u)\,(u\in N(x))$,
\begin{alignat}{2}
f_n(y|u)\!=\!f(y|x)\!+\!f^{(1,0)}(y|x)(u-x)\!+\!\frac{1}{2}f^{(2,0)}(y|x)(u-x)^{2}\!+\!o((u-x)^{2})
\!+\!\frac{1}{2}b_n^{2}f^{(0,2)}(y|x)\nabla_{21}+o(b_n^{2}).
\end{alignat}
By stationarity, (5.3), (5.4), assumption (A1) and assumption (A3),
\begin{align}
E\Big(\frac{1}{n}\sum\limits_{i=1}^{n}K_{h_n}(X_i-x)\frac{\delta_i}{H(T_i)}\Lambda_{b_n}(T_i-y)\Big)
=&E\left[K_{h_n}(X-x)E\Big[\frac{\delta}{H(T)}\Lambda_{h_n}(T-y)|X\Big]\right]\nonumber\\
=&\int_\mathbb{R}K(v) f_n(y|x+h_nv) f_X(x+h_nv)dv=O(1),
\end{align}
which implies that $\frac{1}{n}\!\sum\limits_{i=1}^{n}K_{h_n}(X_i-x)\frac{\delta_i}{H(T_i)}\Lambda_{b_n}(T_i-y)\!=\!O_p(1)$. Then (5.1) follows from (5.2) and $h_nb_n\rightarrow0$.
\end{proof}

\begin{proof}[Proof of Theorem 3.2]
By (5.4),
\begin{align*}
\Big(f_n(y|X_1),\cdot\cdot\cdot, f_n(y|X_n)\Big)^{\tau}
=&\mathbf{X}\Big(f(y|x),f^{(1,0)}(y|x)\Big)^{\tau}+\frac{f^{(2,0)}(y|x)}{2}\Big((X_1-x)^2,\cdot\cdot\cdot ,(X_n-x)^2\Big)^{\tau}
\nonumber\\&+\Big(o\big((X_1-x)^2\big),\cdot\cdot\cdot ,o\big((X_n-x)^2\big)\Big)^{\tau}+\Big(\frac{b_n^{2}f^{(0,2)}(y|x)}{2}\nabla_{21}+o(b_n^{2})\Big)(1,\cdot\cdot\cdot ,1)^{\tau},
\end{align*}
which combined with (2.2) and (2.3) implies,
\begin{align}
\mathbf{S}^{-1}_n\mathbf{t}^{*}_n=&\mathbf{S}^{-1}_n\mathbf{t}_n-{\rm{diag}}(1,h_n){\Bigg(\begin{array}{c}f(y|x) \\ f^{(1,0)}(y|x)\end{array}\Bigg)}
-\frac{h_n^2f^{(2,0)}(y|x)}{2}\mathbf{S}^{-1}_n{\Bigg(\begin{array}{c}s_{n2}(x)\\s_{n3}(x)\end{array}\Bigg)}+o(h^2_n)\mathbf{S}^{-1}_n{\Bigg(\begin{array}{c}s_{n2}(x)\\s_{n3}(x)\end{array}\Bigg)}\nonumber\\
&-\frac{b_n^2f^{(0,2)}(y|x)}{2}\nabla_{21}\mathbf{S}^{-1}_n{\Bigg(\begin{array}{c}s_{n0}(x)\\s_{n1}(x)\end{array}\Bigg)}+o(b^2_n)\mathbf{S}^{-1}_n{\Bigg(\begin{array}{c}s_{n0}(x)\\s_{n1}(x)\end{array}\Bigg)}
\nonumber\\
=&{\rm{diag}}(1,h_n){\Bigg(\begin{array}{c}\hat{f}_{LL}(y|x)-f(y|x) \\ \hat{f}_{LL}^{(1,0)}(y|x)-f^{(1,0)}(y|x)\end{array}\Bigg)}-\frac{h_n^2f^{(2,0)}(y|x)}{2}\mathbf{S}^{-1}_n{\Bigg(\begin{array}{c}s_{n2}(x)\\s_{n3}(x)\end{array}\Bigg)}+o(h^2_n)\mathbf{S}^{-1}_n{\Bigg(\begin{array}{c}s_{n2}(x)\\s_{n3}(x)\end{array}\Bigg)}\nonumber\\
&-\frac{b_n^2f^{(0,2)}(y|x)}{2}\nabla_{21}\mathbf{S}^{-1}_n{\Bigg(\begin{array}{c}s_{n0}(x)\\s_{n1}(x)\end{array}\Bigg)}+o(b^2_n)\mathbf{S}^{-1}_n{\Bigg(\begin{array}{c}s_{n0}(x)\\s_{n1}(x)\end{array}\Bigg)}.
\end{align}

It follows from the fact that $y<\tau_{_Q}$ and $b_n\rightarrow0$ that there is a $\gamma<\tau_{_Q}$ such that $y+b_n\leq\gamma$ for a large $n$. Then, by assumption (A1), Lemma 5.1 and Lemma 5.2,
\begin{align}
&\sqrt{nh_nb_n}\Big|t^{*}_{nj}(x)-t^{*}_{1nj}(x)\Big|\nonumber\\
\leq&\sqrt{nh_nb_n}\frac{1}{n}\sum\limits_{i=1}^{n}\Big|\frac{X_i-x}{h_n}\Big|^jK_{h_n}(X_i-x)\frac{\delta_i}{H(T_i)}
\Lambda_{b_n}(T_i-y)
\frac{|\hat{H}(T_i)-H(T_i)|}{\hat{H}(T_i)}\nonumber\\
\leq&\sqrt{nh_nb_n}\frac{\sup\limits_{u\in[0,\gamma]} |\hat{H}(u)-H(u)|}{H(\gamma)-\sup\limits_{u\in[0,\gamma]}|\hat{H}(u)-H(u)|}\frac{1}{n}\sum\limits_{i=1}^{n}\Big|\frac{X_i-x}{h_n}\Big|^jK_{h_n}(X_i-x)\frac{\delta_i}{H(T_i)}\Lambda_{b_n}(T_i-y)\nonumber\\
=& \sqrt{nh_nb_n}O_p(n^{-\frac{1}{2}}) \frac{1}{n}\sum\limits_{i=1}^{n}\Big|\frac{X_i-x}{h_n}\Big|^jK_{h_n}(X_i-x)\frac{\delta_i}{H(T_i)}\Lambda_{b_n}(T_i-y)\nonumber\\
=&O_p((h_nb_n)^\frac{1}{2})\frac{1}{n}\sum\limits_{i=1}^{n}\Big|\frac{X_i-x}{h_n}\Big|^jK_{h_n}(X_i-x)\frac{\delta_i}{H(T_i)}\Lambda_{b_n}(T_i-y).
\end{align}
By the same method of (5.5),
$E\Big(\frac{1}{n}\sum\limits_{i=1}^{n}\Big|\frac{X_i-x}{h_n}\Big|^jK_{h_n}\!(X_i-x)\frac{\delta_i}{H(T_i)}\Lambda_{b_n}\!(T_i-y)\Big)\!\!=\!O(1)$, which combined with (5.7) implies that $\frac{1}{n}\sum\limits_{i=1}^{n}\Big|\frac{X_i-x}{h_n}\Big|^jK_{h_n}\!(X_i-x)\frac{\delta_i}{H(T_i)}\Lambda_{b_n}\!(T_i-y)\!=\!O_p(1)$.
Then $\sqrt{nh_nb_n}\Big|t^{*}_{nj}(x)-t^{*}_{1nj}(x)\Big|=O_p((h_nb_n)^\frac{1}{2})$,
which combined with Lemma 5.2 implies
\begin{align}
\sqrt{nh_nb_n}\mathbf{S}^{-1}_n\mathbf{t}^{*}_n=\sqrt{nh_nb_n}\mathbf{S}^{-1}_n\mathbf{t}^{*}_{1n}+O_p((h_nb_n)^\frac{1}{2}).
\end{align}
Hence Theorem 3.2 follows from (5.6), (5.8), Lemma 5.2, Lemma 5.4 and Slutsky's theorem.
\end{proof}

\section{Proofs of lemmas}
\begin{lemma} (Volkonskii 1959)
Let $Z_1,\cdots, Z_m$ be $\alpha$-mixing random variables measurable with respect to $\sigma-algebras$ $\mathcal{F}^{j_1}_{i_1}$,\ldots, $\mathcal{F}^{j_m}_{i_m}$, respectively, with $1\leq i_1<j_1<i_2<j_2<\cdots<j_m\leq n$, $i_{l+1}-j_{l}\geq k\geq 1$ and $|Z_j|\leq 1$ for  $l$,$j=1,2,...,m$. Then
\begin{align*}
\Big|E(\prod\limits_{j=1}^{m}Z_j)-\prod\limits_{j=1}^{m}E(Z_j)\Big|\leq 16(m-1)\alpha(k),
\end{align*}
where $\mathcal{F}_{a}^{b}$=$\sigma\{Z_i{\color{red}|}a \leq i\leq b\}$ and $\alpha(k)$ is the mixing coefficient.
\end{lemma}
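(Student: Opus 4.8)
The plan is to prove the bound by a telescoping argument that reduces the $m$-fold dependence to a sum of pairwise (two-block) dependencies, each controlled by the elementary $\alpha$-mixing covariance inequality. Throughout I keep the data of the statement: $Z_l$ is $\mathcal{F}_{i_l}^{j_l}$-measurable with $1\le i_1<j_1<\cdots<j_m\le n$ and $i_{l+1}-j_l\ge k$, and $|Z_l|\le 1$, so every partial product $\prod_{r=1}^{l}Z_r$ and every factor $E(Z_r)$ appearing below has modulus at most $1$.

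First I would record the single-step tool: if $U$ is $\mathcal{A}$-measurable, $V$ is $\mathcal{B}$-measurable, $|U|\le 1$ and $|V|\le 1$ (allowing complex values, as arises when the $Z_l$ are characteristic-function factors), then $|E(UV)-E(U)E(V)|\le 16\,\alpha(\mathcal{A},\mathcal{B})$, where $\alpha(\mathcal{A},\mathcal{B})$ is the supremum of $|P(AB)-P(A)P(B)|$ over $A\in\mathcal{A}$, $B\in\mathcal{B}$. For real $U,V$ this follows from the definition of the mixing coefficient: writing the covariance as $E[U(E(V\mid\mathcal{A})-EV)]$ and replacing $U$ by the $\mathcal{A}$-measurable sign of $E(V\mid\mathcal{A})-EV$ reduces it to the covariance of two $\{-1,1\}$-valued simple functions, which the defining supremum over events bounds by $4\alpha(\mathcal{A},\mathcal{B})$. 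For complex $U,V$ one splits into real and imaginary parts, each bounded by $1$, and sums the four resulting real covariances; this is what produces the constant $16$.

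Next comes the telescoping decomposition, which is the heart of the lemma. Set $A_l=E\!\big(\prod_{r=1}^{l}Z_r\big)\prod_{r=l+1}^{m}E(Z_r)$ for $0\le l\le m$, with empty products equal to $1$; then $A_m=E\!\big(\prod_{r=1}^{m}Z_r\big)$, $A_0=\prod_{r=1}^{m}E(Z_r)$, and the target is $\big|\sum_{l=1}^{m}(A_l-A_{l-1})\big|$. Factoring out $\prod_{r=l+1}^{m}E(Z_r)$ gives
\[
A_l-A_{l-1}=\Big[E\Big(\textstyle\prod_{r=1}^{l}Z_r\Big)-E\Big(\textstyle\prod_{r=1}^{l-1}Z_r\Big)E(Z_l)\Big]\prod_{r=l+1}^{m}E(Z_r),
\]
whose bracket is exactly the two-block covariance of $U_l:=\prod_{r=1}^{l-1}Z_r$ and $V_l:=Z_l$. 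The essential bookkeeping step is that $U_l$ is measurable with respect to $\mathcal{F}_{i_1}^{j_{l-1}}$ while $V_l$ is measurable with respect to $\mathcal{F}_{i_l}^{j_l}$, and the index gap $i_l-j_{l-1}\ge k$ together with the monotonicity of the mixing coefficient yields $\alpha(\mathcal{F}_{i_1}^{j_{l-1}},\mathcal{F}_{i_l}^{j_l})\le\alpha(k)$. Applying the single-step tool and $\big|\prod_{r=l+1}^{m}E(Z_r)\big|\le 1$ then gives $|A_l-A_{l-1}|\le 16\,\alpha(k)$ for each $l$.

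Finally I would assemble the estimate. The $l=1$ increment vanishes, since $A_1-A_0=[E(Z_1)-E(Z_1)]\prod_{r=2}^{m}E(Z_r)=0$, so only the $m-1$ terms $l=2,\dots,m$ survive, each at most $16\,\alpha(k)$, and the triangle inequality delivers $\big|E(\prod_{r=1}^{m}Z_r)-\prod_{r=1}^{m}E(Z_r)\big|\le 16(m-1)\alpha(k)$. The main obstacle is not the telescoping arithmetic but the two supporting facts: establishing the two-block covariance bound from the bare definition of $\alpha$ (the sign and simple-function reduction, plus the real–imaginary splitting that fixes the constant $16$), and the careful verification that each partial product $U_l$ genuinely lives on a block separated from $Z_l$ by a gap of at least $k$, so that the governing coefficient is $\alpha(k)$ and not some coefficient at a shorter lag.
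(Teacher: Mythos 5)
Your proof is correct. Note, however, that the paper itself offers no proof of this lemma to compare against: it is quoted verbatim from Volkonskii and Rozanov (1959) as a known tool (used only once, in establishing (6.11), where the $Z_j$ are the characteristic-function factors $\exp\big(itn^{-1/2}\eta_{1j}(x)\big)$). Your argument is the standard one for this classical result: the telescoping through $A_l=E\big(\prod_{r=1}^{l}Z_r\big)\prod_{r=l+1}^{m}E(Z_r)$ is the right decomposition, the measurability bookkeeping $\prod_{r=1}^{l-1}Z_r\in\mathcal{F}_{i_1}^{j_{l-1}}$ with gap $i_l-j_{l-1}\geq k$ correctly pins the coefficient at $\alpha(k)$, the vanishing of the $l=1$ increment accounts for the factor $m-1$, and you correctly identified that the constant $16$ (rather than $4$) comes from needing the covariance inequality for \emph{complex} variables of modulus at most one, obtained by splitting into real and imaginary parts --- precisely the form required by the application to characteristic functions.
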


\begin{lemma}(Hall and Heyde 1980, Corollary A.2)
Suppose that $Z_1$ and $Z_2$ are random variables such that $E|Z_1|^p<\infty,$ $E|Z_2|^q<\infty,$ where $p,q>1$, $p^{-1}+q^{-1}<1.$ Then
\begin{align*}
|E[Z_1Z_2]-E[Z_1]E[Z_2]| \leq 8(E|Z_1|^p)^{1/p}(E|Z_2|^q)^{1/q} \Big(\sup\limits_{A\in\sigma(Z_1),B\in\sigma(Z_2)}|P(AB)-P(A)P(B)|\Big)^{1-(p^{-1}+q^{-1})}.\end{align*}
\end{lemma}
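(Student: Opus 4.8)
The statement is the classical Davydov covariance inequality, so in the paper it is simply quoted from Hall and Heyde (1980); nevertheless I sketch the standard argument. Throughout write $\alpha=\sup_{A\in\sigma(Z_1),\,B\in\sigma(Z_2)}|P(AB)-P(A)P(B)|$, $\|Z\|_r=(E|Z|^r)^{1/r}$, and abbreviate the covariance by $\mathrm{Cov}(Z_1,Z_2)=E[Z_1Z_2]-E[Z_1]E[Z_2]$. The plan is a three-stage reduction: first the bounded case, then truncation, then an optimization over the truncation levels that manufactures the fractional power of $\alpha$.

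First I would treat the bounded case: if $U$ is $\sigma(Z_1)$-measurable and $V$ is $\sigma(Z_2)$-measurable with $\|U\|_\infty\le1$ and $\|V\|_\infty\le1$, then $|\mathrm{Cov}(U,V)|\le4\alpha$. For indicators $U=\mathbf{I}_A$ and $V=\mathbf{I}_B$ this is exactly $|P(AB)-P(A)P(B)|\le\alpha$ from the definition of $\alpha$. To reach arbitrary bounded $U,V$ I would first reduce to a sign: since $U$ is $\sigma(Z_1)$-measurable, $\mathrm{Cov}(U,V)=E[U\,\psi]$ with $\psi=E(V-EV\mid\sigma(Z_1))$, whence $|\mathrm{Cov}(U,V)|\le E|\psi|=\mathrm{Cov}(U_0,V)$ for $U_0=\mathbf{I}_{\{\psi>0\}}-\mathbf{I}_{\{\psi<0\}}$. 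Applying the layer-cake identity $V=\int_0^1(\mathbf{I}_{\{V>t\}}-\mathbf{I}_{\{V\le-t\}})\,dt$ and splitting $U_0$ into its two level sets reduces everything to the indicator bound, and the two binary splittings produce the factor $4$.

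Next I would handle general $Z_1\in L^p$, $Z_2\in L^q$ by truncation at levels $M_1,M_2>0$: set $T_j=(Z_j\wedge M_j)\vee(-M_j)$, so $T_1/M_1$ and $T_2/M_2$ fall under the bounded case and $|\mathrm{Cov}(T_1,T_2)|\le4M_1M_2\alpha$. I would then use
\[
\mathrm{Cov}(Z_1,Z_2)=\mathrm{Cov}(T_1,T_2)+\mathrm{Cov}(Z_1-T_1,Z_2)+\mathrm{Cov}(T_1,Z_2-T_2),
\]
and control the two remainder covariances by the generalized H\"older inequality (available since $p^{-1}+q^{-1}<1$, with the constant function absorbing the leftover exponent), bounding them through $\|Z_1-T_1\|_p$, $\|Z_2-T_2\|_q$ and the moment norms $\|Z_1\|_p$, $\|Z_2\|_q$, where the tail norms are estimated by Markov's inequality on $\{|Z_j|>M_j\}$.

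The last step is to choose $M_1,M_2$ (of the order $\|Z_1\|_p\alpha^{-1/p}$ and $\|Z_2\|_q\alpha^{-1/q}$, up to constants) so as to balance the bounded term $4M_1M_2\alpha$ against the two tail terms; this balancing is exactly what converts the single power of $\alpha$ from the bounded case into the fractional power with exponent $1-p^{-1}-q^{-1}$ and fixes the numerical constant $8$. I expect this optimization to be the main obstacle: one must pair the H\"older exponents so that the tail contributions genuinely decay in $M_j$, and verify that the balance reproduces the stated exponent and constant rather than merely a bound of the same shape (the sharp form is Davydov's, and is most cleanly obtained through the quantile-transform refinement). Since the inequality is entirely standard, in the write-up it is enough to invoke Hall and Heyde (1980, Corollary A.2).
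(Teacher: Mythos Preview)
Your reading is correct: the paper gives no proof of this lemma at all, simply quoting it as Corollary A.2 of Hall and Heyde (1980) and then invoking it as a black box in the covariance estimates. Your sketch of the Davydov argument (bounded case via indicators, truncation, then balancing the truncation levels) is the standard one and is sound as a plan, so there is nothing to correct; just be aware that in the final write-up you need no more than the citation, exactly as you say in your last sentence.
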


\begin{proof}[Proof of Lemma 5.3]
Note that
\begin{align}
\tilde{f}_{NW}(y|x)-f(y|x)=&\tilde{f}_{NW}(y|x)-f_n(y|X_i)+f_n(y|X_i)-f(y|x)\nonumber\\
=&s_{n0}^{-1}(x)\times\frac{1}{nh_n}\sum\limits_{i=1}^{n}K\Big(\frac{X_i-x}{h_n}\Big)\left\{\Big[\frac{\delta_i}{H(T_i)}\Lambda_{b_n}(T_i-y)
-f_n(y|X_i)\Big]+\Big[f_n(y|X_i)-f(y|x)\Big]\right\}\nonumber\\
:=&s_{n0}^{-1}(x)[A_n(x)+B_n(x)],
\end{align}
where $A_n(x)\!=\!\frac{1}{nh_n}\sum\limits_{i=1}^{n}K\Big(\frac{X_i-x}{h_n}\Big)\Big[\frac{\delta_i}{H(T_i)}\Lambda_{b_n}(T_i-y)
\!-\!f_n(y|X_i)\Big]$, $B_n(x)\!=\!\frac{1}{nh_n}\sum\limits_{i=1}^{n}K\Big(\frac{X_i-x}{h_n}\Big)\Big[f_n(y|X_i)\!-\!f(y|x)\Big]$.
It follows from Lemma 5.2 that $s_{n0}(x)\stackrel{P}{\longrightarrow}f_X(x)$. Then Lemma 5.3 will follow from (6.1) and Slutsky's theorem if one can prove that
\begin{gather}
B_n(x)\!=\!\frac{h_n^2}{2}\triangle_{21}\Big\{f_X(x)f^{(2,0)}(y|x)\!+\!2f_X^{'}(x)f^{(1,0)}(y|x)\Big\}\!+\!\frac{b_n^2}{2}\nabla_{21}f^{(0,2)}(y|x)f_X(x)
\!+\!o(h_n^2\!+\!b_n^2)\!+\!o_p((nh_n)^{-1/2}),\\
\sqrt{nh_nb_n}A_n(x)\stackrel{D}{\longrightarrow}N\Big(0,\frac{f(x,y)}{H(y)}\nabla_{02}\triangle_{02}\Big).
\end{gather}

First consider (6.2). By the fact that $Z=E(Z)+O_p(\sqrt{Var(Z)})$ for
  any random variable $Z$,  (6.2) will follow if one can prove that  $E[B_n(x)]= \frac{h_n^2}{2}\triangle_{21}\Big\{f_X(x)f^{(2,0)}(y|x)+2f_X^{'}(x)f^{(1,0)}(y|x)\Big\}+\frac{b_n^2}{2}\nabla_{21}f^{(0,2)}(y|x)f_X(x)
+o(h_n^2+b_n^2)$, and $nh_nVar[B_n(x)]=o(1)$.

By (5.4) and assumptions (A1)-(A3),
\begin{align*}
E[B_n(x)]=&\frac{1}{h_n}E\Big[K\Big(\frac{X-x}{h_n}\Big)\Big(f_n(y|X)-f(y|x)\Big)\Big]\\
=&\int_\mathbb{R} \frac{1}{h_n}K\Big(\frac{u-x}{h_n}\Big)\Big[f_n(y|u)-f(y|x)\Big]f_X(u)du\\
=&\int_\mathbb{R} K(v)\Big[h_nf^{(1,0)}(y|x)v+\frac{h_n^2}{2}f^{(2,0)}(y|x)v^2+o(h^2_nv^2)+\frac{b_n^2}{2}\nabla_{21}f^{(0,2)}(y|x)+o(b^2_n)\Big]\nonumber\\&\times
\Big[f_X(x)+h_nf_X^{'}(x)v+o(h_nv)\Big]dv\\
=&\frac{h_n^2}{2}\triangle_{21}\Big[f_X(x)f^{(2,0)}(y|x)+2f_X^{'}(x)f^{(1,0)}(y|x)\Big]+\frac{b_n^2}{2}\nabla_{21}f^{(0,2)}(y|x)f_X(x)
+o(h_n^2+b_n^2).
\end{align*}
It is easy to show that
\begin{align}
nh_nVar[B_n(x)]=&\frac{1}{h_n}Var\Big(K\Big(\frac{X-x}{h_n}\Big)\Big(f_n(y|X)-f(y|x)\Big)\Big)
+\frac{2}{h_n}\sum\limits_{k=1}^{n-1}\Big(1-\frac{k}{n}\Big)\nonumber\\&\times
Cov\Big(K\Big(\frac{X_1-x}{h_n}\Big)\Big(f_n(y|X_1)-f(y|x)\Big),
K\Big(\frac{X_{1+k}-x}{h_n}\Big)\Big(f_n(y|X_{1+k})-f(y|x)\Big)\Big)\nonumber\\
=:& J_{1n}(x)+J_{2n}(x).
\end{align}
From assumptions (A1)-(A3) and (5.4),
\begin{align}
J_{1n}(x)\leq &\frac{1}{h_n}E\Big[K^2\Big(\frac{X-x}{h_n}\Big)\Big(f_n(y|X)-f(y|x)\Big)^2\Big]\nonumber\\
=&\int_\mathbb{R} K^2(v)\Big(f_n(y|x+h_nv)-f(y|x)\Big)^2f_X(x+h_nv)dv=o(1).
\end{align}
From assumptions (A1)-(A4) and (5.4),
\begin{align}
&\Big|Cov\Big(K\Big(\frac{X_1-x}{h_n}\Big)\Big(f_n(y|X_1)-f(y|x)\Big),
K\Big(\frac{X_{1+k}-x}{h_n}\Big)\Big(f_n(y|X_{1+k})-f(y|x)\Big)\Big)\Big|\nonumber\\
\leq&\Big|E\Big(K\Big(\frac{X_1-x}{h_n}\Big)\Big(f_n(y|X_1)-f(y|x)\Big)
K\Big(\frac{X_{1+k}-x}{h_n}\Big)\Big(f_n(y|X_{1+k})-f(y|x)\Big)\Big)\Big|\nonumber\\
&+\Big|E\Big(K\Big(\frac{X_1-x}{h_n}\Big)\Big(f_n(y|X_1)-f(y|x)\Big)\Big)\Big|^2\nonumber\\
\leq& h^2_n\int_\mathbb{R}\int_\mathbb{R} K(u)K(v)\Big|\Big(f_n(y|x+h_nu)-f(y|x)\Big)\Big(f_n(y|x+h_nv)-f(y|x)\Big)\Big| f_k(x+h_nu,x+h_nv)dudv\nonumber\\
&+\Big(h_n\int_\mathbb{R} K(u)\Big(f_n(y|x+h_nu)-f(y|x)\Big)f_X(x+h_nu)du \Big)^2\nonumber\\
=&O(h^2_n).\nonumber
\end{align}
In addition, it follows from Lemma 6.2, (5.4) and assumptions (A1)-(A3) that
\begin{align}
&\Big|Cov\Big(K\Big(\frac{X_1-x}{h_n}\Big)\Big(f_n(y|X_1)-f(y|x)\Big),K\Big(\frac{X_{1+k}-x}{h_n}\Big)\Big(f_n(y|X_{1+k})-f(y|x)\Big)\Big)\Big|\nonumber\\
\leq& Const\, [\alpha(k)]^{1-\frac{1}{\lambda}}\Big(E\Big|K\Big(\frac{X-x}{h_n}\Big)\Big(f_n(y|X)-f(y|x)\Big)\Big|^{2\lambda}\Big)^\frac{1}{\lambda}\nonumber\\
=& Const\, [\alpha(k)]^{1-\frac{1}{\lambda}}\Big(h_n\int_\mathbb{R} |K(u)|^{2\lambda}\Big|f_n(y|x+h_nu)-f(y|x)\Big|^{2\lambda}f_X(x+h_nu)du \Big)^\frac{1}{\lambda}\nonumber\\
=& O\Big([\alpha(k)]^{1-\frac{1}{\lambda}} h^{\frac{1}{\lambda}}_n\Big).\nonumber
\end{align}
Let $d_n=[h^{-(1-\frac{1}{\lambda})/\eta}_n]$, for some $1-\frac{1}{\lambda}<\eta<\lambda-2$, then
\begin{align}
J_{2n}(x)=&O(h^{-1}_n)\Big(\sum\limits_{k=1}^{d_n}+\sum\limits_{k=d_n+1}^{n-1}\Big)\min\left\{h^2_n,
[\alpha(k)]^{1-\frac{1}{\lambda}}h^{\frac{1}{\lambda}}_n\right\}\nonumber\\
=&O(h_nd_n)+O \left(\sum\limits_{k=d_n+1}^{n-1}[\alpha(k)]^{1-\frac{1}{\lambda}}h^{-(1-\frac{1}{\lambda})}_n\right)\nonumber\\
=&O(1)\left\{h_nd_n+d_n^{-(\lambda-2)}h^{-(1-\frac{1}{\lambda})}_n\right\}=O(1)\left\{h_nd_n+h_n^{(1-\frac{1}{\lambda})(\frac{\lambda-2}{\eta}-1)}\right\}
=o(1).
\end{align}
Then, (6.4)-(6.6) imply that $Var[B_n(x)]=o_p((nh_n)^{-1/2})$.

Now consider (6.3). Set $A_{ni}(x)=\Big(\frac{b_n}{h_n}\Big)^{\frac{1}{2}}K\Big(\frac{X_i-x}{h_n}\Big)\Big[\frac{\delta_i}{H(T_i)}\Lambda_{b_n}(T_i-y)
-f_n(y|X_i)\Big],\,1\leq i\leq n$, then $\sqrt{nh_nb_n}A_n(x)=n^{-\frac{1}{2}}\sum\limits_{i=1}^{n}A_{ni}(x)$.
Hence, (6.3) will follow if one can prove
\begin{align}
n^{-\frac{1}{2}}\sum\limits_{i=1}^{n}A_{ni}(x)\stackrel{D}{\longrightarrow}N\Big(0,\frac{f(x,y)}{H(y)}\nabla_{02}\triangle_{02}\Big).
\end{align}

From assumption (A5), there is a sequence of positive integers $\{\delta_n\}$ such that $\delta_n\rightarrow \infty$, \\$\delta_nq_n=o((nh_nb_n)^{\frac{1}{2}})$, and $\delta_n(n(h_nb_n)^{-1})^{\frac{1}{2}}\alpha(q_n)\rightarrow 0$. Let $p_n=[(nh_nb_n)^{\frac{1}{2}}\delta^{-1}_n]$, $r_n=\big[\frac{n}{p_n+q_n}\big]$. Then, it can easily be shown that
\begin{align}
 q_np^{-1}_n\rightarrow 0, \quad r_nq_nn^{-1}\rightarrow 0,\quad r_np_nn^{-1}\rightarrow 1,\quad p_n(nh_nb_n)^{-\frac{1}{2}}\rightarrow 0,\quad r_n\alpha(q_n)\rightarrow 0.
\end{align}
 Next we will make use of Doob's technique which splits the sum $\sum\limits_{i=1}^{n}A_{ni}(x)$ into big and small blocks. Specifically, partition $\{1,2,...,n\}$ into $2 r_n+1$ subsets with large blocks of size $p_n$ and small blocks of size $q_n$.
 Thus, $n^{-\frac{1}{2}}\sum\limits_{i=1}^{n}A_{ni}(x)$ can be written as
 \begin{eqnarray*}
n^{-\frac{1}{2}}\sum\limits_{i=1}^{n}A_{ni}(x)
=n^{-\frac{1}{2}}\{S_{1n}(x)+S_{2n}(x)+S_{3n}(x)\},
\end{eqnarray*}
where
\begin{align*}
S_{1n}(x)=\sum\limits_{j=1}^{r_n}\eta_{1j}(x),\,\,\,S_{2n}(x)=\sum \limits_{j=1}^{r_n}\eta_{2j}(x),\,\,\,S_{3n}(x)=\eta_{3}(x),
\end{align*}
with
\begin{align*}\eta_{1j}(x)=\sum\limits_{i=k_j}^{k_j+p_n-1}A_{ni}(x),\, \,\,\eta_{2j}(x)=\sum\limits_{i=l_j}^{l_j+q_n-1}A_{ni}(x),\,\,\,
\eta_{3}(x)=\sum\limits_{i=r_n(p_n+q_n)+1}^{n}A_{ni}(x),\end{align*}
where $k_j=(j-1)(p_n+q_n)+1$, $l_j=(j-1)(p_n+q_n)+p_n+1$, $j=1,2,...,r_n$.
Then (6.7) will follow if one can prove that
\begin{align}
&n^{-1}E[S^2_{2n}(x)]\rightarrow 0,\quad n^{-1}E[S^2_{3n}(x)]\rightarrow 0,\\
&Var[n^{-\frac{1}{2}}S_{1n}(x)]\rightarrow \frac{f(x,y)}{H(y)}\nabla_{02}\triangle_{02},\\
&\Big|E\exp\Big(it\sum\limits_{j=1}^{r_n}n^{-\frac{1}{2}}\eta_{1j}(x)\Big)-\prod\limits_{j=1}^{r_n}
E\exp\Big(itn^{-\frac{1}{2}}\eta_{1j}(x)\Big)\Big|\rightarrow 0,\\
&e_n(\epsilon)=\frac{1}{n}\sum\limits_{j=1}^{r_n}E[\eta^2_{1j}(x)\mathbf{I}(|\eta_{1j}(x)|>\epsilon\sqrt{n})]\rightarrow 0,\quad \forall\epsilon>0.
\end{align}

First establish (6.9). By (5.3), $E[A_{ni}(x)]=0,\,i=1,\cdots,n$, which imply that $E[S_{in}(x)]=0,\,i=1,2,3$. Then
\begin{align}
n^{-1}E[S^2_{2n}(x)]=n^{-1}Var[S_{2n}(x)]=&\frac{1}{n}\sum\limits_{j=1}^{r_n}\sum\limits_{i=l_j}^{l_j+q_n-1}Var[A_{ni}(x)]
+\frac{2}{n}\sum\limits_{1\leq i<j\leq r_n}^{}Cov(\eta_{2i}(x),\eta_{2j}(x))\nonumber\\
&+\frac{2}{n}\sum\limits_{j=1}^{r_n}\sum\limits_{l_j\leq k<l\leq\l_j+q_n-1}^{}
Cov\Big(A_{nk}(x),A_{nl}(x)\Big)\nonumber\\
=&:J_{3n}(x)+J_{4n}(x)+J_{5n}(x).
\end{align}
For $J_{3n}(x)$, it follows from (5.3) that
\begin{align}
Var[A_{ni}(x)]
=E[A^2_{ni}(x)]=&\frac{b_n}{h_n}E\Big[\Big(\frac{\delta_i}{H(T_i)}\Lambda_{b_n}(T_i-y)-f_n(y|X_i)\Big)^2K^2\Big(\frac{X_i-x}{h_n}\Big)\Big]\nonumber\\
=&\frac{b_n}{h_n}\Big\{E\Big[K^2\Big(\frac{X_i-x}{h_n}\Big)\frac{{\delta_i}^2}{H^2(T_i)}\Lambda_{b_n}^2(T_i-y)\Big]+E\Big[K^2\Big(\frac{X_i-x}{h_n}\Big)f_n^2(y|X_i)\Big]\nonumber\\
&-2E\Big[K^2\Big(\frac{X_i-x}{h_n}\Big)f_n(y|X_i)\frac{\delta_i}{H(T_i)}\Lambda_{b_n}(T_i-y)\Big]\Big\}\nonumber\\
=&\frac{b_n}{h_n}\Big\{E\Big[K^2\Big(\frac{X_i-x}{h_n}\Big)\frac{{\delta_i}^2}{H^2(T_i)}\Lambda_{b_n}^2(T_i-y)\Big]-E\Big[K^2\Big(\frac{X_i-x}{h_n}\Big)f_n^2(y|X_i)\Big]\Big\}\nonumber\\
=&:J_{3n1}(x)+J_{3n2}(x).
\end{align}
From the independence between $(Y_i,X_i)$ and $C_i$, assumptions (A1) and (A2),
\begin{align*}
E\Big[\frac{\delta_i^2}{H^2(T_i)}\frac{1}{b^2_n}\Lambda^2\Big(\frac{T_i-y}{b_n}\Big) \Big|X_i=u\Big]
=&\frac{1}{b^2_n}E\Big[\frac{\mathbf{I}_{\{Y_i\leq C_i\}}}{H^2(Y_i)}\Lambda^2\Big(\frac{Y_i-y}{b_n}\Big) \Big|X_i=u\Big]\nonumber\\
=&\frac{1}{b^2_n}E\Big[\frac{1}{H^2(Y_i)}\Lambda^2\Big(\frac{Y_i-y}{b_n}\Big) E[\mathbf{I}_{\{Y_i\leq C_i\}}|Y_i]\Big|X_i=u\Big]\nonumber\\
=&\frac{1}{b^2_n}E\Big[\frac{1}{H(Y_i)}\Lambda^2\Big(\frac{Y_i-y}{b_n}\Big)\Big|X_i=u\Big]\nonumber\\
=&\frac{1}{b_n}\int_\mathbb{R}\frac{1}{H(y+tb_n)}\Lambda^2(t)f(y+b_nt|u)dt\nonumber\\
=&\frac{1}{b_n}\frac{1}{H(y)}f(y|u)\nabla_{02}+o\Big(\frac{1}{b_n}\Big),
\end{align*}
then, again by assumptions (A1)-(A3),
\begin{align}
J_{3n1}(x)=&\frac{b_n}{h_n}E\Big[K^2\Big(\frac{X_i-x}{h_n}\Big)E\big[\frac{\delta^2_i}{H^2(T_i)}\frac{1}{b^2_n}\Lambda^2
\Big(\frac{T_i-y}{b_n}\Big)\big|X_i\big]\Big]\nonumber\\
=&\frac{b_n}{h_n}\int_\mathbb{R} K^2\Big(\frac{u-x}{h_n}\Big)\Big\{\frac{1}{b_n}\frac{1}{H(y)}f(y|u)\nabla_{02}+o(\frac{1}{b_n})\Big\}f_X(u)du\nonumber\\
=&b_n\int_\mathbb{R} K^2(v)\Big\{\frac{1}{b_n}\frac{1}{H(y)}f(y|x+h_nv)\nabla_{02}+o(\frac{1}{b_n})\Big\}f_X(x+h_nv)dv\nonumber\\
=&\frac{\nabla_{02}}{H(y)}\int_\mathbb{R} K^2(v)f(y|x+h_nv) f_X(x+h_nv)dv+o(1)\nonumber\\
=&\frac{f(x,y)}{H(y)}\nabla_{02}\triangle_{02}+o(1).
\end{align}
In addition, by assumptions (A1)-(A3) and (5.4),
\begin{align}
J_{3n2}(x)&=-\frac{b_n}{h_n}E\Big[K^2\Big(\frac{X_i-x}{h_n}\Big)f_n^2(y|X_i)\Big]\nonumber\\
&=-\frac{b_n}{h_n}\int_{\mathbb{R}} K^2\Big(\frac{u-x}{h_n}\Big)\Big[f(y|x)+O((u-x))+O(b_n^2)\Big]^2f_X(u)du\nonumber\\
&=-b_n\int_{\mathbb{R}} K^2(v)\big[f(y|x)+O(1)h_nv+O(b_n^2)\big]^2f_X(x+h_nv)dv\nonumber\\
&=-b_nO(1)=o(1).
\end{align}
It follows from (6.14)-(6.16) that,
\begin{align}
Var[A_{ni}(x)]=\frac{f(x,y)}{H(y)}\nabla_{02}\triangle_{02}+o(1),
\end{align}
which combined with (6.8) yields that \begin{align}J_{3n}(x)=O(r_nq_nn^{-1})=o(1).\end{align}

Consider $J_{4n}(x)$ and $J_{5n}(x)$. Since both $J_{4n}(x)$ and $J_{5n}(x)$ are bounded by $\frac{2}{n}\sum\limits_{1\leq i<j\leq\ n}|Cov(A_{ni}(x),A_{nj}(x))|$, then,  $J_{4n}(x)=o(1)$ and $J_{5n}(x)=o(1)$ will hold if one can prove that
\begin{align}
\frac{1}{n}\sum\limits_{1\leq i<j\leq\ n}|Cov(A_{ni}(x),A_{nj}(x))|=o(1).
\end{align}
It follows from the fact that $y<\tau_{_Q}$ and $b_n\rightarrow0$ that there is a $\gamma<\tau_{_Q}$ such that $y+b_n\leq\gamma$ for a large $n$. Then, by assumption (A1),
\begin{align}
&\Big|Cov(A_{ni}(x),A_{nj}(x))\Big|\nonumber\\
\leq& \frac{b_n}{h_n}E\Big|K\Big(\frac{X_i-x}{h_n}\Big)\Big(\frac{\delta_i}{H(T_i)}\Lambda_{b_n}(T_i-y)-f_n(y|X_i)\Big)
K\Big(\frac{X_j-x}{h_n}\Big)\Big(\frac{\delta_j}{H(T_j)}\Lambda_{b_n}(T_j-y)-f_n(y|X_j)\Big)\Big|\nonumber\\
\leq&\frac{b_n}{h_n}\frac{1}{H^2(\gamma)}E\Big[\Big|K\Big(\frac{X_i-x}{h_n}\Big)K\Big(\frac{X_j-x}{h_n}\Big)\frac{1}{b_n}
\Lambda\Big(\frac{Y_i-y}{b_n}\Big)\frac{1}{b_n}\Lambda\Big(\frac{Y_j-y}{b_n}\Big)\Big|\Big]\nonumber\\
&+\frac{b_n}{h_n}\frac{1}{H(\gamma)}E\Big[\Big|K\Big(\frac{X_i-x}{h_n}\Big)K\Big(\frac{X_j-x}{h_n}\Big)\frac{1}{b_n}
\Lambda\Big(\frac{Y_i-y}{b_n}\Big)f_n(y|X_j)\Big|\Big]\nonumber\\
&+\frac{b_n}{h_n}\frac{1}{H(\gamma)}E\Big[\Big|K\Big(\frac{X_i-x}{h_n}\Big)K\Big(\frac{X_j-x}{h_n}\Big)\frac{1}{b_n}
\Lambda\Big(\frac{Y_j-y}{b_n}\Big)f_n(y|X_i)\Big|\Big]\nonumber\\
&+\frac{b_n}{h_n}E\Big[\Big|K\Big(\frac{X_i-x}{h_n}\Big)K\Big(\frac{X_j-x}{h_n}\Big)f_n(y|X_i)f_n(y|X_j)\Big|\Big]
\end{align}
From assumptions (A1)-(A4) and (5.4), by simple calculation, \begin{align*}&E\Big[\Big|K\Big(\frac{X_i-x}{h_n}\Big)K\Big(\frac{X_j-x}{h_n}\Big)\frac{1}{b_n}
\Lambda\Big(\frac{Y_i-y}{b_n}\Big)\frac{1}{b_n}\Lambda\Big(\frac{Y_j-y}{b_n}\Big)\Big|\Big]=O(h_n^2),\\
&E\Big[\Big|K\Big(\frac{X_i-x}{h_n}\Big)K\Big(\frac{X_j-x}{h_n}\Big)\frac{1}{b_n}
\Lambda\Big(\frac{Y_i-y}{b_n}\Big)f_n(y|X_j)\Big|\Big]=O(h_n^2),\\
&E\Big[\Big|K\Big(\frac{X_i-x}{h_n}\Big)K\Big(\frac{X_j-x}{h_n}\Big)\frac{1}{b_n}
\Lambda\Big(\frac{Y_j-y}{b_n}\Big)f_n(y|X_i)\Big|\Big]=O(h_n^2),\\
&E\Big[\Big|K\Big(\frac{X_i-x}{h_n}\Big)K\Big(\frac{X_j-x}{h_n}\Big)f_n(y|X_i)f_n(y|X_j)\Big|\Big]=O(h_n^2),
\end{align*}
which combined with (6.20) imply $Cov(A_{ni}(x),A_{nj}(x))=O(h_nb_n)$.
On the other hand, on the base of the fact that there is a $\gamma<\tau_{_Q}$ such that $y+b_n\leq\gamma$ for a large $n$, then, by assumptions (A1)-(A3) and (5.4),
\begin{align}
E|A_{ni}(x)|^{2\lambda}=&\Big(\frac{b_n}{h_n}\Big)^{\lambda}E\Big[\Big|K\Big(\frac{X_i-x}{h_n}\Big)\Big(\frac{\delta_i}{H(T_i)}
\Lambda_{b_n}(T_i-y)-f_n(y|X_i)\Big)\Big|^{2\lambda}\Big]\nonumber\\
\leq&Const\Big(\frac{b_n}{h_n}\Big)^{\lambda}E\Big[\Big|K\Big(\frac{X_i-x}{h_n}\Big)
\frac{1}{H(Y_i)}\Lambda_{b_n}(Y_i-y)\Big|^{2\lambda}\Big]
+Const\Big(\frac{b_n}{h_n}\Big)^{\lambda}E\Big[\Big|K\Big(\frac{X_i-x}{h_n}\Big)f_n(y|X_i)\Big|^{2\lambda}\Big]\nonumber\\
\leq&Const\Big(\frac{b_n}{h_n}\Big)^{\lambda}\frac{h_nb_n}{b^{2\lambda}_nH^{2\lambda}(\gamma)}\int_\mathbb{R}\int_\mathbb{R}\Big|K^{2\lambda}(u)\Lambda^{2\lambda}(v)\Big|
f(x+h_nu,y+b_nv)dudv\nonumber\\
&+Const\Big(\frac{b_n}{h_n}\Big)^{\lambda}h_n\int_\mathbb{R}\Big|K^{2\lambda}(u)\big[f(y|x)+O(1)h_nu+O(b_n^2)\big]^{2\lambda}\Big|f_X(x+h_nu)du\nonumber\\
=&O\Big((h_nb_n)^{1-\lambda}\Big),\nonumber
\end{align}
which combined with Lemma 6.2 implies
\begin{align*}|Cov(A_{n1}(x),A_{n(k+1)}(x))|\leq Const\,[\alpha(k)]^{1-\frac{1}{\lambda}}[E|A_{ni}(x)|^{2\lambda}]^\frac{1}{\lambda}
=O\Big([\alpha(k)]^{1-\frac{1}{\lambda}} (h_nb_n)^{-(1-\frac{1}{\lambda})}\Big).
\end{align*}
Let $c_n=[(h_nb_n)^{-(1-\frac{1}{\lambda})/\eta}]$, for some $1-\frac{1}{\lambda}<\eta<\lambda-2$, then
\begin{align}
\frac{1}{n}\sum\limits_{1\leq i<j\leq\ n}|Cov(A_{ni}(x),A_{nj}(x))|
=&\frac{O(1)}{n}\Big(\sum\limits_{k=1}^{c_n}+\sum\limits_{k=c_n+1}^{n-1}\Big)\min\left\{h_nb_n,
[\alpha(k)]^{1-\frac{1}{\lambda}}(h_nb_n)^{-(1-\frac{1}{\lambda})}\right\}\nonumber\\
=&O(h_nb_nc_n)+O \left(\sum\limits_{k=c_n+1}^{n-1}[\alpha(k)]^{1-\frac{1}{\lambda}}(h_nb_n)^{-(1-\frac{1}{\lambda})}\right)\nonumber\\
=&O(1)\left\{h_nb_nc_n+c_n^{-(\lambda-2)}(h_nb_n)^{-(1-\frac{1}{\lambda})}\right\}\nonumber\\
=&O(1)\left\{(h_nb_n)^{1-(1-\frac{1}{\lambda})/\eta}+(h_nb_n)^{(1-\frac{1}{\lambda})(\frac{\lambda-2}{\eta}-1)}\right\}
=o(1).\nonumber
\end{align}
Then (6.19) follows. And by (6.13), (6.18) and (6.19), $n^{-1}E^2[S^2_{2n}(x)]\rightarrow0$.

Now consider $n^{-1}E[S^2_{3n}(x)]$. It follows from (6.8), (6.17) and (6.19) that
\begin{align}
n^{-1}E[S^2_{3n}(x)]=&n^{-1}Var[S_{3n}(x)]\nonumber\\
=&\frac{1}{n}\sum\limits_{i=r_n(p_n+q_n)+1}^{n}Var[A_{ni}(x)]+\frac{2}{n}\sum\limits_{r_n(p_n+q_n)+1\leq i<j\leq n}^{}Cov(A_{ni}(x),A_{nj}(x))\nonumber\\
\leq& Const\,\frac{n-r_n(p_n+q_n)}{n}+\frac{2}{n}\sum\limits_{1\leq i<j\leq n}^{}\Big|Cov\big(A_{ni}(x),A_{ni}(x)\big)\Big|\nonumber\\
=&o(1).\nonumber
\end{align}
Then (6.9) follows.

Consider (6.10). It follows from (6.8), (6.17) and (6.19) that
\begin{align}
 Var[n^{-\frac{1}{2}}S_{1n}(x)]=&\frac{1}{n}\sum\limits_{j=1}^{r_n}\sum\limits_{i=k_j}^{k_j+p_n-1}Var[A_{ni}(x)]+\frac{2}{n}\sum\limits_{1\leq i<j\leq r_n}^{}Cov(\eta_{1i}(x),\eta_{1j}(x))\nonumber\\
&+\frac{2}{n}\sum\limits_{j=1}^{r_n}\sum\limits_{k_j\leq k<l\leq k_j+p_n-1}^{}
Cov(A_{nk}(x),A_{nl}(x))\nonumber\\
=&\frac{r_np_n}{n}\Big\{\frac{f(x,y)}{H(y)}\nabla_{02}\triangle_{02}+o(1)\Big\}+O\Big(\frac{1}{n}\sum\limits_{1\leq i<j\leq n}^{}\big|Cov\big(A_{ni}(x),A_{ni}(x)\big)\big|\Big)\nonumber\\
=&\frac{f(x,y)}{H(y)}\nabla_{02}\triangle_{02}+o(1),\nonumber
\end{align}
which implies (6.10).

Consider (6.11). By Lemma 6.1 and (6.8),
\begin{align}
\Big|E\exp\Big(it\sum\limits_{j=1}^{r_n}n^{-\frac{1}{2}}\eta_{1j}(x)\Big)-\prod\limits_{j=1}^{r_n}
E\exp\Big(itn^{-\frac{1}{2}}\eta_{1j}(x)\Big)\Big|\leq 16r_n\alpha(q_n+1)\rightarrow 0,\nonumber
\end{align}
which implies (6.11).

Finally, consider (6.12). It follows from that assumption (A1), (5.4) and the fact that there is a $\gamma<\tau_{_Q}$ such that $y+b_n\leq\gamma$ for a large $n$, that $(h_nb_n)^{1/2}A_{ni}(x)=O(1)$. Then $\max\limits_{1\leq j\leq r_n}|\eta_{1j}(x)|=O(p_n(h_nb_n)^{-\frac{1}{2}})$, which combined with the fact that $p_n(nh_nb_n)^{-\frac{1}{2}}\rightarrow 0$ (see (6.8)) implies that for a large enough $j$, \\$\mathbf{I}(|\eta_{1j}(x)|>\epsilon\sqrt{n})=0$. Hence, (6.12) follows.
\end{proof}

\begin{proof}[Proof the Lemma 5.4]
Lemma 5.4 will follow if one can prove that, for any given vector of real numbers $a=(a_0,a_1)^{\tau}\neq 0$,
\begin{align}
\sqrt{nh_nb_n}a^{\tau}t^{*}_{1n}\stackrel{D}{\longrightarrow}N\Big(0,\sigma^2(y|x)f^2_X(x)a^{\tau}\mathbf{V}a\Big).
\end{align}

Let $R_{ni}(x)\!=\!\big(\frac{b_n}{h_n}\big)^{\frac{1}{2}}\tilde{K}\Big(\frac{X_i-x}{h_n}\Big)\!\Big\{\frac{\delta_i}{H(T_i)}\Lambda_{b_n}(T_i-y)
-f_n(y|X_i)\Big\},\,1\!\leq \!i\!\leq\! n$, where $\tilde{K}(u)\!=\!K(u)\{a_0+a_1u\}, u\in\mathbb{R}$. Note that if the function $\tilde{K}(\cdot)$ in $R_{ni}(x)$ is replaced by the function ${K}(\cdot)$ in $A_{ni}(x)$, $R_{ni}(x)$ will turn into $A_{ni}(x)$. Hence, if the function $\tilde{K}(\cdot)$ substitute for the function $K(\cdot)$ in the process of proving (6.7), it can be similarly proved that
\begin{gather}
n^{-\frac{1}{2}}\sum\limits_{i=1}^{n}R_{ni}(x)\stackrel{D}{\longrightarrow}
N\Big(0,\sigma^2(y|x)f^2_X(x)a^{\tau}\mathbf{V}a\Big).
\end{gather}
Then (6.21) follows from (6.22) and the fact that $\sqrt{nh_nb_n}a^{\tau}t^{*}_{1n}=n^{-\frac{1}{2}}\sum\limits_{i=1}^{n}{R_{ni}(x)}$.
\end{proof}

\hspace{-0.6cm}\textbf{References}

\hspace{-0.6cm}A\"{\i}t-Sahalia, Y. 1999.
\newblock Transition densities for interest rate and other non-linear diffusions.
\newblock {\it Journal of Finance} 54(4): 1361-1395.

 \hspace{-0.6cm}Cai, Z.W. 2001.
\newblock Estimating a denstribution function for censored time series data.
\newblock {\it Journal of Multivariate Analysis} 78(2): 299-318.

\hspace{-0.6cm}De Gooijer J.G., and D. Zerom. 2003.
\newblock On conditional density dstimation.
\newblock {\it Statistica Neerlandica} 57(2): 159-176.

\hspace{-0.6cm}Doukhan, P. 1994.
\newblock {\it Mixing: Properties and examples. Lecture notes in statistics,}
\newblock vol. 85. New York: Springer-Verlag.

\hspace{-0.6cm}Eastoe, E.F., C.J. Halsall, J.E. Heffernan, and H. Hung. 2006.
\newblock A statistical comparison
of survival and replacement analyses for the use of censored data in a contaminant air
database: A case study from the Canadian Arctic.
\newblock {\it Atmospheric Environment} 40(34): 6528-6540.

\hspace{-0.6cm}Fan, J., and T.H. Yim. 2004.
\newblock A crossvalidation method for estimating conditional densities.
\newblock {\it Biometrika} 91(4): 819-834.

\hspace{-0.6cm}Ferraty, F., A. Laksaci, and P. Vieu. 2005.
\newblock Functional time series prediction via conditional mode estimation.
\newblock {\it Compte Rendus Acad. Sci. Paris} 340(5): 389-392.

\hspace{-0.6cm}Gussoum, Z., and E. Ould-Sa\"{\i}d. 2008.
\newblock On nonparametric estimation of the regression function under random censorship model.
\newblock {\it Statistics $\&$ Decisions} 26(3): 159-177.

\hspace{-0.6cm}Gussoum, Z., and E. Ould-Sa\"{\i}d. 2010.
\newblock Kernel regression uniform rate estimation for censored data under $\alpha-$mixing condition.
\newblock {\it Electronic Journal of Statistics} 4: 117-132.

\hspace{-0.6cm}Gussoum, Z., and E. Ould-Sa\"{\i}d. 2012.
\newblock Central limit theorem for the kernel estimator of the regression function for censored time series.
\newblock {\it Journal of Nonparametric Statistics} 24(2): 379-397.

\hspace{-0.6cm}Hall, P., and C.C. Heyde. 1980.
\newblock {\it Martingale limit theory and its application}.
\newblock New York: Springer-Verlag.

\hspace{-0.6cm}Hyndman, R.J., D.M. Bashtannyk, and G.K., Grunwald. 1996.
\newblock Estimating and visualizing conditional densities.
\newblock {\it Journal of Computational and Graphical Statistics} 5(4): 315-336.

\hspace{-0.6cm}Izbicki, R., and A.B. Lee. 2017.
\newblock Converting high-dimensional regression to high-dimensional conditional density estimation.
\newblock {\it Electronic Journal of Statistics} 11: 2800-2831

\hspace{-0.6cm}Khardani, S., and S. Semmar. 2014.
\newblock Nonparametric conditional density estimation for censored data based on a recursive kernel.
\newblock {\it Electronic Journal of Statistics} 8(2): 2541-2556. 

\hspace{-0.6cm}Kim, C., M. Oh, S.J. Yang, and H. Choi. 2010.
\newblock A local linear estimation of conditional hazard function in censored data.
\newblock {\it Journal of the Korean Statistical Society} 39(3): 347-355.

\hspace{-0.6cm}Liang, H.Y., and J.I. Baek. 2016.
\newblock Asymptotic normality of conditional density estimation with left-truncated and dependent data.
\newblock {\it Statistical Papers} 57: 1-20.

\hspace{-0.6cm}Liang, H.Y., and J. de U\~{n}a-\'{A}lvarez. 2011
\newblock Asymptotic properties of conditional quantile estimator for censored dependent observations.
\newblock {\it Annals of the Institute of Statistical Mathematics} 63(2): 267-289

\hspace{-0.6cm}Liang, H.Y., and M.C. Iglesias-P¨¦rez. 2018.
\newblock Weighted estimation of conditional mean function with truncated, censored and dependent data.
\newblock {\it Statistics} 52(6):1249-1269.

\hspace{-0.6cm}Liang, H.Y., and L. Peng. 2010.
\newblock Asymptotic normality and Berry-Esseen results for conditional density estimator with censored and dependent data.
\newblock {\it Journal of Multivariate Analysis} 101(5): 1043-1054.

\hspace{-0.6cm}Lipsitz, S.R., and J.G. Ibrahim. 2000.
\newblock Estimation with correlated censored survival data with missing covariates.
\newblock {\it Biostatistics} 1(3): 315-327.

\hspace{-0.6cm}Rachdi, M., A. Laksaci, J. Demongeot, A. Abdali, and F. Madani. 2014.
\newblock Theoretical and practical aspects of the quadratic error in the local linear estimation of the conditional density for functional data.
\newblock {\it Computational Statistics and Data Analysis} 73: 53-68.

\hspace{-0.6cm}Spierdijk, L. 2008.
\newblock Nonparametric conditional hazard rate estimation: A local linear approach.
\newblock {\it Computational Statistics and Data Analysis} 52: 2419-2434.

\hspace{-0.6cm}Stute, W. 1993.
\newblock Almost sure representation of the product-limit estimator for truncated data.
\newblock {\it The Annals of Statistics} 21(1): 146-156.

\hspace{-0.6cm}Volkonskii, V.A., and Y.A. Rozanov. 1959.
\newblock Some limit theorems for random functions.
\newblock {\it Theory of Probability and its Applications} 4(2): 178-197.

\end{spacing}
\end{document}